\documentclass{amsart}

\usepackage[margin=1in]{geometry}

\usepackage{graphicx}
\usepackage{amsmath,amssymb,amsfonts}
\usepackage{mathrsfs}
\usepackage{amsthm}
\usepackage[colorlinks,allcolors=blue]{hyperref}
\usepackage{enumerate} 
\usepackage{bbm} 
\usepackage{bm} 
\usepackage{tikz} 
\usetikzlibrary{cd} 

\usepackage{thmtools, thm-restate} 

\newtheorem{theorem}{Theorem}[section]

\newtheorem{lemma}[theorem]{Lemma}
\newtheorem{proposition}[theorem]{Proposition}
\newtheorem{corollary}[theorem]{Corollary}
\theoremstyle{definition}
\newtheorem{definition}[theorem]{Definition}
\newtheorem{remark}[theorem]{Remark}
\newtheorem{example}[theorem]{Example}
\numberwithin{equation}{section}


\newcommand{\N}{\mathbb{N}}
\newcommand{\Z}{\mathbb{Z}}

\newcommand{\C}{\mathbb{C}}

\newcommand{\F}{\mathbb{F}}

\renewcommand{\P}{\mathbb{P}}

\newcommand{\bfP}{\bm{P}}

\newcommand{\mX}{\mathcal{X}}

\newcommand{\mU}{\mathcal{U}}

\newcommand{\M}{\mathcal{M}}

\newcommand{\es}{\emptyset}

\renewcommand{\tilde}{\widetilde}
\renewcommand{\hat}{\widehat}

\newcommand{\floor}[1]{\left\lfloor #1 \right\rfloor}

\DeclareMathOperator*{\E}{\text{\Large $\mathbb{E}$}}

\newcommand{\ind}{\mathbbm{1}}

\newcommand{\innprod}[2]{\left\langle #1, #2 \right\rangle}
\newcommand{\norm}[2]{\left\| #2 \right\|_{#1}}


\title[Furstenberg--S\'{a}rk\"{o}zy theorem and partition regularity over finite fields]{Furstenberg--S\'{a}rk\"{o}zy theorem and partition regularity of polynomial equations over finite fields}

\date{\today}

\author{Ethan Ackelsberg}
\address{\'{E}cole Polytechnique F\'{e}d\'{e}rale de Lausanne (EPFL), 1015 Lausanne, Switzerland}
\email{ethan.ackelsberg@epfl.ch}

\author{Vitaly Bergelson}
\address{Ohio State University, Columbus, OH 43210 USA}
\email{vitaly@math.ohio-state.edu}

\keywords{Finite fields, Furstenberg--S\'{a}rk\"{o}zy theorem, equidistribution, partition regularity, Loeb measure}

\subjclass[2020]{11B30 (11T06, 05D10)}

\begin{document}

\maketitle

\begin{abstract}
	We prove new combinatorial results about polynomial configurations in large subsets of finite fields.
	An analogue of the Furstenberg--S\'{a}rk\"{o}zy theorem was established over finite fields in \cite{blm}, where the authors show that for any polynomial $P(x) \in \Z[x]$ with $P(0) = 0$, if $A \subseteq \F_q$ is a subset of a $q$-element finite field and $A$ does not contains distinct $a, b$ such that $b - a = P(x)$ for some $x \in \F_q$, then $|A| = o(q)$.
	In fields of sufficiently large characterstic, the bound $o(q)$ can be improved to $O(q^{1/2})$ by the Weil bound.
	We match this bound in the low characteristic setting and give a complete algebraic characterization of the class of polynomials $P(x) \in \Z[x]$ for which the Furstenberg--S\'{a}rk\"{o}zy theorem holds over finite fields of characteristic $p$ for each prime $p$.
	
	Our next main result deals with an enhancement of the Furstenberg--S\'{a}rk\"{o}zy theorem over finite fields.
	Another consequence of the Weil bound is that if $P(x) \in \Z[x]$ and $A, B \subseteq \F_q$ and there do not exist elements $a \in A$ and $b \in B$ with $b - a = P(x)$ for some $x \in \F_q$, then $|A| |B| = O(q)$, provided that the characteristic of $\F_q$ is sufficiently large depending on $P$.
	We provide a complete description of the family of polynomials for which this asymmetric enhancement of the Furstenberg--S\'{a}rk\"{o}zy theorem holds over fields of characteristic $p$ with $p$ a fixed prime, achieving the same quantitative bounds that are available in the high characteristic setting.
	The class of polynomials we deal with for this problem is intimately connected with the equidistributional behavior of polynomial sequences in characteristic $p$ studied in \cite{bl}.
	
	The exponential sum estimates that we produce in dealing with the above problems also allow us to establish partition regularity of families of polynomial equations over finite fields.
	As an example, we are able to prove: if $P(x) \in \Z[x]$ with $P(0) = 0$, then for any $r \in \N$, there exists $N = N(P,r)$ and $c = c(P,r) > 0$ such that if $q > N$ (with no restriction on the characteristic) and $\F_q = \bigcup_{i=1}^r{C_i}$, then there are at least $cq^2$ monochromatic solutions to the equation $P(x) + P(y) = P(z)$.
\end{abstract}


\section{Introduction}


The goal of this paper is to develop a systematic approach to combinatorial problems dealing with polynomial configurations over finite fields.
An impetus for studying polynomial configurations comes from the Furstenberg--S\'{a}rk\"{o}zy theorem \cite{furstenberg, sarkozy}, which states that any set of integers with positive density contains a square difference (or, more generally, a difference equal to the value of an integer polynomial with zero constant term).
The Furstenberg--S\'{a}rk\"{o}zy has a meaningful variant over finite fields \cite{blm}, which we seek to refine and improve in a variety of ways.
The main regime of interest for us is when the polynomials involved are of high degree relative to the characteristic of the finite field, which introduces a number of complications that are not present for polynomials of low degree and which has not been as thoroughly treated as the low degree case.
Our ideas draw inspiration from recurrence phenomena in ergodic theory and utilize estimates on exponential sums in finite fields.
We combine the classical Weil bound on exponential sums in finite fields with new algebraic tools for handling polynomials of high degree to produce a dichotomy in the behavior of exponential sums involving polynomials of \emph{arbitrary degree} (see Theorem \ref{thm: exponential sum} below).
This has several combinatorial implications, some of which we highlight here:

\begin{itemize}
	\item	We give a characterization of the family of polynomials satisfying the Furstenberg--S\'{a}rk\"{o}zy theorem over finite fields (which we term \emph{finite field intersective polynomials}).
		Moreover, for finite field intersective polynomials $P(x) \in \Z[x]$, we establish a sharp power-saving bound on the maximal size of a subset $A$ of a finite field $\F_q$ such that $A$ does not contain any differences equal to a value of $P$, i.e., there are no pairs of distinct elements $a, b \in A$ with $b - a = P(x)$ for some $x \in \F_q$.
	\item	We provide a characterization and prove a sharp power-saving bound for the family of polynomials $P(x) \in \Z[x]$ satisfying an asymmetric version of the Furstenberg--S\'{a}rk\"{o}zy theorem where the elements $a$ and $b$ satisfying $b - a = P(x)$ are taken from potentially distinct sets $A$ and $B$.
	\item	We prove new Ramsey-theoretic results about polynomial equations over finite fields.
            For example, we show that the polynomial equation $P(x) + P(y) = P(z)$ is partition regular over finite fields for polynomials $P(x) \in \Z[x]$ with $P(0) = 0$.
\end{itemize}

After introducing some notation, we turn to a more in-depth discussion of our results below.

\subsection{Notation}

In this paper, we make use of the following asymptotic notation for functions on $\N$.
We write $f(n) \ll g(n)$ or $f(n) = O(g(n))$ if there exists a constant $C > 0$ such that $|f(n)| \le C |g(n)|$ for all sufficiently large $n \in \N$.
We use subscripts in expressions such as $f(n) \ll_P g(n)$ to indicate the parameters $P$ on which the implicit constant $C$ depends.
The ``little o'' notation $f(n) = o(g(n))$ means that $f$ grows slower than $g$ in the sense that $\lim_{n \to \infty} \frac{|f(n)|}{|g(n)|} = 0$.

Given a finite set $S$ and a function $f : S \to \C$, we write
\begin{equation*}
	\E_{s \in S} f(s) = \frac{1}{|S|} \sum_{s \in S} f(s)
\end{equation*}
to denote the average of $f$ over $S$, and
\begin{equation*}
	\norm{L^2(S)}{f} = \left( \E_{s \in S} |f(s)|^2 \right)^{1/2}
\end{equation*}
for the $L^2$ norm of $f$ with respect to the normalized counting measure on $S$.


\subsection{The Furstenberg--S\'{a}rk\"{o}zy theorem over finite fields}

The starting point for our discussion is the following version of the Furstenberg--S\'{a}rk\"{o}zy theorem \cite{furstenberg, sarkozy} in the context of finite fields.\footnote{The full statement of \cite[Theorem 5.16]{blm} is a version of the polynomial Szemer\'{e}di theorem over finite fields.
To be precise, given any finite family of polynomials $P_1(x), \dots, P_m(x) \in \Z[x]$ with $P_i(0) = 0$, if $A \subseteq \F_q$ does not contain $\{x, x + P_1(y), \dots, x + P_m(y)\}$ for some $y \ne 0$, then $|A| = o(q)$.
We do not pursue refinements of the full theorem in this paper, so our focus will be on the $m=1$ case.
For quantitative improvements for general $m \in \N$ under some additional conditions on $P_1, \dots, P_m$, see \cite{ab}.}

\begin{theorem}[cf. {\cite[Theorem 5.16]{blm}}] \label{thm: field Sarkozy}
	Let $P(x) \in \Z[x]$ be a polynomial with $P(0) = 0$.
	For any prime power $q$, if $A \subseteq \F_q$ does not contain distinct $a, b$ with $b - a = P(x)$ for some $x \in \F_q$, then $|A| = o(q)$.
\end{theorem}

If one adds the additional assumption that the characteristic of $\F_q$ is greater than the degree of $P$, then one can establish quantitative bounds on the size of the set $A$ in the conclusion of Theorem \ref{thm: field Sarkozy} relatively easily using classical estimates on the size of exponential sums in finite fields.
In particular, by the Weil bound (in the form given in \cite[Theorem 3.2]{kowalski}), if $A \subseteq \F_q$ does not contain distinct $a, b$ with $b - a = P(x)$ for some $x \in \F_q$ and the characteristic of $\F_q$ is larger than the degree of $P$, then
\begin{equation} \label{eq: square root bound}
    |A| \ll_d q^{1/2}.
\end{equation}
However, the case when the degree of $P$ is larger than the characteristic of $\F_q$ is more delicate and requires extra care.
Recent work of Li and Sauermann \cite{ls} nevertheless establishes a power saving bound for Theorem \ref{thm: field Sarkozy} in the low characteristic setting\footnote{Li and Sauermann in fact prove a stronger result that applies to subsets $A \subseteq \F_q[t]$ of polynomials of degree less than $N$. The finite field result comes as an immediate consequence of their more general theorem.
The first power saving bound for the Furstenberg--S\'{a}rk\"{o}zy theorem in the function field setting is due to Green \cite{green} under the additional technical assumption that the number of roots of the polynomial $P$ is not divisible by $p$.} using the polynomial method of Croot--Lev--Pach \cite{clp}.

\begin{theorem}[{\cite[Corollary 1.5]{ls}}] \label{thm: ls}
	Let $p$ be a prime.
    Let $P(x) \in \F_p[x]$ be a polynomial of degree $d$ with $P(0) = 0$.
	There exists a positive constant $\gamma = \gamma(p, d) > 0$ such that if $k \in \N$ and $A \subseteq \F_{p^k}$ does not contain distinct $a, b \in A$ with $b - a = P(x)$ for some $x \in \F_{p^k}$, then $|A| \ll_{p,d} p^{k(1 - \gamma)}$.
\end{theorem}

One of the results of our paper is an improvement to the power saving bound in Theorem \ref{thm: ls} in the low characteristic context that matches the bound \eqref{eq: square root bound} from the high characteristic setting.
Namely, we show that the constant $\gamma$ can be taken equal to $\frac{1}{2}$, independently of the characteristic $p$ and the degree $d$ of the polynomial under consideration.

\begin{theorem} \label{thm: optimal power savings}
    Let $p$ be a prime.
	Let $P(x) \in \F_p[x]$ be a polynomial of degree $d$ with $P(0) = 0$.
	For any $k \in \N$, if $A \subseteq \F_{p^k}$ does not contain distinct $a, b \in A$ with $b - a = P(x)$ for some $x \in \F_{p^k}$, then
	\begin{equation*}
		|A| \ll_d p^{k/2}.
	\end{equation*}
\end{theorem}

\begin{remark}
	The exponent in Theorem \ref{thm: optimal power savings} is sharp.
	This follows from known bounds on the size of independent sets in generalized Paley graphs; see Proposition \ref{prop: sharpness}.
\end{remark}

The main tool in Theorem \ref{thm: optimal power savings} is an extension of the Weil bound to estimate exponential sums involving polynomials of arbitrary degree in low characteristic.

\begin{theorem} \label{thm: exponential sum}
    Let $p$ be a prime, let $P(x) \in \F_p[x]$ be a polynomial of degree $d$, and let $k \in \N$.
	If $\chi : \F_{p^k} \to \C$ is an additive character, then either
	\begin{equation*}
		\left| \sum_{x \in \F_{p^k}} \chi(P(x)) \right| = p^k \qquad \text{or} \qquad \left| \sum_{x \in \F_{p^k}} \chi(P(x)) \right| \le (d-1) p^{k/2}.
	\end{equation*}
\end{theorem}

\begin{remark}
    In the case $p \nmid d$ (in particular, if $d < p$), Theorem \ref{thm: exponential sum} is nothing but the classical Weil bound, and the only character $\chi$ for which $\left| \sum_{x \in \F_{p^k}} \chi(P(x)) \right| = p^k$ is the trivial character $\chi = 1$.
    Theorem \ref{thm: exponential sum} expands the scope of exponential sum estimates over finite fields by providing information about polynomials of arbitrary degree, with the necessary stipulation that there may be additional characters $\chi$ for which the exponential sum is as large as possible.
    It turns out that the collection of characters satisfying $\left| \sum_{x \in \F_{p^k}} \chi(P(x)) \right| = p^k$ may include many nontrivial characters but always has a nice algebraic description, which we provide in Theorem \ref{thm: exponential sum 2} below.
\end{remark}

Our approach using exponential sums has several advantages.
In addition to strengthening the power saving bound, the method has added flexibility that allows us to answer several other combinatorial questions about polynomial patterns over finite fields.
Consider, for example, the following two statements about a polynomial $P(x) \in \F_p[x]$:
\begin{itemize}
	\item	for any $\delta > 0$, there exists $K = K(P, \delta)$ such that if $k \ge K$ and $A \subseteq \F_{p^k}$ with $|A| \ge \delta p^k$, then there exist distinct $a, b \in A$ with $b - a = P(x)$ for some $x \in \F_{p^k}$
		(Furstenberg--S\'{a}rk\"{o}zy over finite fields);
	\item	for any $\delta > 0$, there exists $K = K(P, \delta)$ such that if $k \ge K$ and $A, B \subseteq \F_{p^k}$ with $|A| \cdot |B| \ge \delta p^{2k}$, then there exist $a \in A$ and $b \in B$ with $b - a = P(x)$ for some $x \in \F_{p^k}$
		(asymmetric Furstenberg--S\'{a}rk\"{o}zy over finite fields).
\end{itemize}
We give complete algebraic characterizations of the families of polynomials satisfying each of these statements and give a quantitative strengthening to the conclusion for the corresponding polynomials.

We also utilize a technique originating in ergodic theory \cite{b_density-schur, ert-update} to establish partition regularity of families of polynomial equations using the exponential sum estimate from Theorem \ref{thm: exponential sum}.


\subsection{Necessary and sufficient conditions for the Furstenberg--S\'{a}rk\"{o}zy theorem over finite fields}

The classical Furstenberg--S\'{a}rk\"{o}zy theorem was refined by Kamae and Mend\`{e}s France \cite{km}, who characterized the class of polynomials $P(x) \in \Z[x]$ for which every positive density subset of the integers contains a pair $a, b$ with $b - a = P(x)$ for some $x \in \Z$ as the family of polynomials with a root mod $m$ for every $m \in \N$ (so-called \emph{intersective polynomials}).
In the function field setting $\F_p[t]$, an analogous result holds, with the appropriate notion of intersective being that a polynomial $P(x) \in (\F_p[t])[x]$ has a root mod $g$ for every $g \in \F_p[t] \setminus \{0\}$.\footnote{This is essentially proved in \cite{bl} (see Theorem 9.2 and the remark following Theorem 9.5 therein).
However, there is a small error in the remark in \cite{bl}, which we briefly explain here. In the remark following Theorem 9.5 in \cite{bl}, intersective polynomials are defined as polynomials $P(x) \in (\F_p[t])[x]$ such that for any finite index subgroup $\Lambda \le (\F_p[t], +)$, there exists $m \in \F_p[t]$ such that $P(nm) \in \Lambda$ for every $n \in \Lambda$.
	The definition of intersective we have given is different and deals with a wider class of polynomials but is the correct notion to characterize the Furstenberg--S\'{a}rk\"{o}zy theorem in function fields.
	An example of an intersective polynomial that does not fit the condition in \cite{bl} is $P(x) = x+1$.
	Taking $\Lambda$ to be the subgroup $\Lambda = t\F_p[t]$ of index $p$, we have $P(nm) \equiv 1 \pmod{\Lambda}$ for every $n \in \Lambda, m \in \F_p[t]$, so the condition from \cite{bl} is not satisfied.
	However, $P(-1) = 0$, so $P$ is intersective (according to our definition).}
One may obtain $\F_{p^k}$ as a quotient of $\F_p[t]$, so any intersective polynomial $P(x) \in \F_p[x]$ will also satisfy the Furstenberg--S\'{a}rk\"{o}zy theorem over finite fields.
However, there are additional polynomials that satisfy the Furstenberg--S\'{a}rk\"{o}zy theorem over finite fields, so intersective is no longer the characterizing property.

In order to give a full description of polynomials satisfying the Furstenberg--S\'{a}rk\"{o}zy theorem over finite fields, we need a representation of a polynomial that is well-suited to algebraic manipulations in characteristic $p$.
There are two important classes of polynomials to consider when working in finite characteristic: \emph{separable} polynomials and \emph{additive} polynomials.

\begin{definition} \label{defn: seperable, additive}
    Let $p$ be a prime number.
    \begin{itemize}
        \item Call a monomial $x^d$ \emph{separable} (in characteristic $p$) if $p \nmid d$.
        \item A polynomial $P(x) = a_0 + \sum_{i=1}^n a_i x^{r_i} \in \F_p[x]$ is \emph{separable} if each nonconstant monomial $x^{r_i}$ is separable.
        \item We say that a polynomial $\eta(x) \in \F_p[x]$ is \emph{additive} if for any $k \in \N$ and any $x, y \in \F_{p^k}$, one has $\eta(x + y) = \eta(x) + \eta(y)$.
    \end{itemize}
\end{definition}

\begin{remark}
	The definition of additive polynomials involves looking at every finite field of characteristic $p$ for the following reason.
	If $\F_{p^k}$ is a fixed finite field, then the polynomial $x^{p^k}$ agrees (as a function on $\F_{p^k}$) with the polynomial $x$.
 As a consequence, there are many extra polynomials that behave additively as functions $\F_{p^k}$ but should not be considered as additive in characteristic $p$ in general.
	For example, the polynomial $P(x) = x^{2p^k} - x^2$ satisfies $P(x) = 0$ for $x \in \F_{p^k}$, so $P(x+y) = P(x) + P(y)$ for $x, y \in \F_{p^k}$.
	By considering $P$ as a function over a larger finite field such as $\F_{p^{k+1}}$, we can detect the non-additive behavior of $P$.
	
	One may equivalently define additive polynomials as those polynomial $\eta(x) \in \F_p[x]$ such that $\eta(x+y) = \eta(x) + \eta(y)$ for all $x,y \in \overline{\F}_p$, where $\overline{\F}_p$ is the algebraic closure of $\F_p$.
\end{remark}

Additive polynomials take the form $\eta(x) = \sum_{j=0}^m a_j x^{p^j}$.
Every polynomial $P(x) \in \F_p[x]$ has a unique representation as $P(x) = a_0 + \sum_{i=1}^n \eta_i(x^{r_i})$, where $\eta_1, \dots, \eta_n$ are nonzero additive polynomials and $x^{r_1}, \dots, x^{r_n}$ are distinct separable monomials.\footnote{Indeed, suppose $P(x) = a_0 + a_1 x + \dots + a_d x^d$.
For each $k \in \N$, we write $k = p^{j_k} s_k$ with $j_k \ge 0$ and $p \nmid s_k$.
Then $x^k = (x^{s_k})^{p^{j_k}}$, so $P(x) = a_0 + \sum_{i=1}^n \eta_i (x^{r_i})$, where $\{r _i : 1 \le i \le n\} = \{s_k : a_k \ne 0\}$ and $\eta_i(x) = \sum_{s_k = r_i} a_k x^{p^{j_k}}$.}
The crucial algebraic information is captured by the additive polynomials $\eta_1, \dots, \eta_n$, and we can encode all of this content in a single additive polynomial by the following lemma:

\begin{restatable}{lemma}{Euclidean} \label{lem: Euclidean}
	Let $\eta_1, \dots, \eta_n \in \F_p[x]$ be additive polynomials, let $H_i$ be the subgroup $H_i = \eta_i(\overline{\F}_p) \le (\overline{\F}_p, +)$ for $i = 1, \dots, n$, and let $H = \sum_{i=1}^n H_i$.
	There exists an additive polynomial $\eta \in \F_p[x]$ such that $\eta(\overline{\F}_p) = H$.
	Moreover, $\eta = \sum_{i=1}^n \eta_i \circ \zeta_i$ for some additive polynomials $\zeta_1, \dots, \zeta_n \in \F_p[x]$.
\end{restatable}

\begin{remark}
	The proof of Lemma \ref{lem: Euclidean} (given in Section \ref{sec: additive core}) is constructive and provides a simple algorithm for computing $\eta$ from $\eta_1, \dots, \eta_n$, so properties of $\eta$ are easily checkable for any given polynomial $P$.
 We use Lemma \ref{lem: Euclidean} as a crucial algebraic tool in proving many of the results of this paper.
\end{remark}

\begin{definition}
	Let $P(x) \in \F_p[x]$ be a nonconstant polynomial, and write $P(x) = a_0 + \sum_{i=1}^n \eta_i \left( x^{r_i} \right)$ with $\eta_i$ additive and $x^{r_i}$ separable and distinct.
	Let $\eta$ be an additive polynomial as produced by Lemma \ref{lem: Euclidean} from $\eta_1, \dots, \eta_n$.
	We call $\eta$ the \emph{additive core} of $P$.
\end{definition}

\begin{restatable}{theorem}{FFIntersective} \label{thm: FF intersective}
	Let $P(x) \in \F_p[x]$ be a nonconstant polynomial, let $\eta$ be its additive core.
	The following are equivalent:
	\begin{enumerate}[(i)]
		\item	for any $\delta > 0$, there exists $K = K(P, \delta)$ such that if $k \ge K$ and $A \subseteq \F_{p^k}$ with $|A| \ge \delta p^k$, then there exist distinct $a, b \in A$ with $b - a = P(x)$ for some $x \in \F_{p^k}$;
		\item	if $A \subseteq \F_{p^k}$ does not contain distinct $a, b \in A$ with $b - a = P(x)$ for some $x \in \F_{p^k}$, then $|A| \ll_d p^{k/2}$;
		\item	$a_0 = 0$ or $\eta(1) \ne 0$.
	\end{enumerate}
\end{restatable}

\begin{definition} \label{defn: FF-intersective}
    We call a polynomial satisfying any (all) of the conditions in Theorem \ref{thm: FF intersective} \emph{finite field intersective in characteristic $p$} (or \emph{FF$_p$-intersective} for short).
\end{definition}

We note that condition (iii) provides an efficient algorithmic method for checking if a polynomial is FF$_p$-intersective.\footnote{The problem of determining whether or not a polynomial in $\Z[x]$ or $(\F_p[t])[x]$ is intersective is decidable but less straightforward; see \cite[Theorem 1]{bb} for $\Z[x]$ and \cite[Theorem 1]{mishra} for a generalization to polynomials over rings of integers of global fields.}
Examples of FF$_p$-intersective polynomials include intersective polynomials (in the sense defined above that $P$ has a root mod $g$ for every $g \in \F_p[t] \setminus \{0\}$) and polynomials of degree $d < p$ (or, more generally, separable polynomials).
The simplest example of a non-FF$_p$-intersective polynomial is the polynomial $P(x) = x^p - x + 1$.


\subsection{Asymmetric Furstenberg--S\'{a}rk\"{o}zy theorem over finite fields}

Our next application of Theorem \ref{thm: exponential sum} is an asymmetric version of the Furstenberg--S\'{a}rk\"{o}zy theorem where we find elements $a$ and $b$ with $b - a = P(x)$ belonging to sets $A$ and $B$ that are allowed to differ from one another.
Such an enhancement is not possible in the integers due to the presence of ``local obstructions.''
In the finite field setting, an asymmetric enhancement is sometimes possible (for example if the polynomial has degree smaller than the characteristic) and is in other cases impossible (for example, if $P(x) = x^p - x$, then the group $H_k$ generated by the values of $P$ is a proper subgroup of $\F_{p^k}$, and one can take $A$ and $B$ to be distinct cosets of $H_k$).
We describe in Theorem \ref{thm: A,B Sarkozy} below the necessary and sufficient conditions for a polynomial $P$ to allow for an asymmetric form of the Furstenberg--S\'{a}rk\"{o}zy theorem.
The necessary and sufficient conditions involve the notion of \emph{equidistribution} for polynomial sequences in characteristic $p$, so we begin by introducing the basic definitions related to equidistribution that we will use.

\begin{definition} \label{defn: characters}~
    \begin{itemize}
        \item A character $\chi : \F_p[t] \to \C$ \emph{rational} (or \emph{periodic}) if there exists $f \in \F_p[t]$ such that $\chi(fg + h) = \chi(h)$ for every $g, h \in \F_p[t]$ and \emph{irrational} (\emph{aperiodic}) otherwise.
        \item A polynomial $P(x) \in \F_p[x]$ is \emph{good for irrational equidistribution} if
\begin{equation*}
	\lim_{N \to \infty} \E_{f \in \M_N} \chi(P(f)) = 0
\end{equation*}
for every irrational character $\chi \in \hat{\F_p[t]}$, where $\M_N = \{t^N + c_{N-1} t^{N-1} + \dots + c_1 t + c_0 : c_i \in \F_p\}$ is the family of monic polynomials of degree $N$ over $\F_p$.
    \end{itemize}
\end{definition}

In the following theorem, we fully characterize when a polynomial $P(x) \in \F_p[x]$ is good for irrational equidistribution in terms of a simple algebraic criterion.
Our proof (given in Section \ref{sec: irrational equidistribution}) combines a general Weyl-type equidistribution theorem from \cite{bl} with Lemma \ref{lem: Euclidean}.

\begin{restatable}{theorem}{IrrationalEquidistribution} \label{thm: irrational equidistribution}
	A polynomial $P(x) \in \F_p[x]$ is good for irrational equidistribution if and only if its additive core is of the form $\eta(x) = ax$ for some $a \in \F_p^{\times}$.
\end{restatable}

\begin{example} \label{eg: irrational equidistribution}
	(1) Every nonconstant separable polynomial (see Definition \ref{defn: seperable, additive}) is good for irrational equidistribution.
	(This was previously shown in \cite[Corollary 0.5]{bl}.)
	
	(2) The polynomial $P(x) = x^p$ is not good for irrational equidistribution.
	
	(3) More generally, an additive polynomial $P(x) = \sum_{j=0}^m{a_j x^{p^j}}$ is good for irrational equidistribution if and only if $P(x) = a_0 x$.
	
	(4) The polynomial $P(x) = x^{p^2} + x^{2p} - x$ is good for irrational equidistribution.
	Indeed, upon writing $P(x) = \eta_1(x) + \eta_2(x^2)$ with $\eta_1(x) = x^{p^2} - x$ and $\eta_2(x) = x^p$ and taking $\zeta_1(x) = -x$ and $\zeta_2(x) = x^p$, we see that $(\eta_1 \circ \zeta_1 + \eta_2 \circ \zeta_2)(x) = x$.
	
	(5) The polynomial $P(x) = x^{2p} - x^2$ is not good for irrational equidistribution, as can be seen by expressing $P(x) = \eta(x^2)$ with $\eta(x) = x^p - x$.
\end{example}

There is one additional observation that we should make before stating our asymmetric version of the Furstenberg--S\'{a}rk\"{o}zy theorem over finite fields:
since the Frobenius map $\Phi : x \mapsto x^p$ is an automorphism of $\F_{p^k}$, the polynomials $P$ and $P \circ \Phi$ have the same image in $\F_{p^k}$.
Up to this trivial modification, we show that being good for irrational equidistribution is a necessary and sufficient condition for an asymmetric Furstenberg--S\'{a}rk\"{o}zy theorem:

\begin{restatable}{theorem}{ABSarkozy} \label{thm: A,B Sarkozy}
	Let $P(x) \in \F_p[x]$.
	The following are equivalent:
	\begin{enumerate}[(i)]
		\item	there exists a polynomial $Q(x) \in \F_p[x]$ and an integer $s \ge 0$ such that $Q$ is good for irrational equidistribution and $P(x) = Q \left( x^{p^s} \right)$;
		\item	for any $\delta > 0$, there exists $K_1 = K_1(P, \delta)$ such that if $k \ge K_1$ and $A, B \subseteq \F_{p^k}$ satisfy $|A| \cdot |B| \ge \delta p^{2k}$, then there exists $a \in A$ and $b \in B$ with $b - a = P(x)$ for some $x \in \F_{p^k}$;
		\item	for any $\delta > 0$, there exists $K_2 = K_2(P, \delta)$ such that if $k \ge K_2$ and $A, B \subseteq \F_{p^k}$ satisfy $|A| \cdot |B| \ge \delta p^{2k}$, then $A + B + S = \F_{p^k}$, where $S = P(\F_{p^k})$;
		\item	for any $A, B \subseteq \F_{p^k}$;
			\begin{equation*}
				\left| \left\{ (x,y) \in \F_{p^k} \times \F_{p^k} : x \in A~\text{and}~x + P(y) \in B \right\} \right| = |A| |B| + O \left( p^{k/2} \sqrt{|A| |B|} \right).
			\end{equation*}
	\end{enumerate}
\end{restatable}

\begin{remark}
	(1) Note that by Theorem \ref{thm: irrational equidistribution}, (i) is equivalent to the condition $\sum_{i=1}^n \eta_i \circ \zeta_i(x) = a x^{p^s}$ for some additive polynomials $\zeta_1, \dots, \zeta_n$ and $a \in \F_p^{\times}$, where $P(x) = a_0 + \sum_{i=1}^n \eta_i(x^{r_i})$ is the representation of $P$ in terms of additive polynomials $\eta_i$ and distinct separable monomials $x^{r_i}$.
	Using the algorithmic method behind Lemma \ref{lem: Euclidean}, one can therefore check by hand whether or not a polynomial satisfies condition (i).
	
	(2) At first glance, one may be tempted to explain the phenomenon $A + B + S = \F_{p^k}$ in item (iii) by the fact that $S$ is a large subset of $\F_{p^k}$ (it has density at least $d^{-1}$, where $d = \deg{P}$).
	However, this is too naive an explanation: if $P$ does not satisfy (i), then we meet an algebraic obstruction that allows for large subsets $A, B \subseteq \F_{p^k}$ with $A + B + S \ne \F_{p^k}$.
 This algebraic obstruction can be seen explicitly in the proof of Theorem \ref{thm: A,B Sarkozy} in Section \ref{sec: A,B Sarkozy}.
\end{remark}


\subsection{Partition regular polynomial equations over finite fields}

Our last application of Theorem \ref{thm: exponential sum} concerns partition regularity of polynomial equations.
Combining Theorem \ref{thm: exponential sum} with the technology of Loeb measures on ultraproduct spaces, we are able to establish partition regularity of families of polynomial equations over finite fields, such as the following:

\begin{restatable}{theorem}{PartitionRegularity} \label{thm: P-P=Q}
	Let $P(x) \in \F_p[x]$ be a nonconstant polynomial, and let $Q(x) \in \F_p[x]$ be FF$_p$-intersective (see Definition \ref{defn: FF-intersective}).
	For any $r \in \N$, there exists $K = K(P, Q, r) \in \N$ and $c = c(P, Q, r) > 0$ such that for any $k \ge K$ and any $r$-coloring $\F_{p^k} = \bigcup_{i=1}^r{C_i}$, there are at least $c p^{2k}$ monochromatic solutions to the equation $P(x) - P(y) = Q(z)$.
	That is,
	\begin{equation*}
		\left| \left\{ (x,y,z) \in \F_{p^k}^3 : P(x) - P(y) = Q(z)~\text{and}
		 ~\{x,y,z\} \subseteq C_i~\text{for some}~i \in \{1, \dots, r\} \right\} \right| \ge c p^{2k}
	\end{equation*}
\end{restatable}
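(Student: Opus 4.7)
The plan is to proceed by contradiction using an ultraproduct/Loeb measure construction, leveraging the asymptotic total ergodicity and equidistribution results established earlier in the paper. Suppose the theorem fails for some $r \in \N$: there exist $k_n \to \infty$ and $r$-colorings $\F_{p^{k_n}} = \bigsqcup_{i=1}^r C_i^{(n)}$ whose number of monochromatic solutions to $P(x) - P(y) = Q(z)$ is $o(p^{2k_n})$. I would fix a non-principal ultrafilter $\omega$ on $\N$, form the ultraproduct $X = \prod_\omega \F_{p^{k_n}}$, and equip it with its Loeb probability measure $\mu$. Then $X$ is a hyperfinite internal abelian group on which $\F_p[t]$ acts by translations, and by Theorem \ref{thm: asymptotic TE} this action is totally ergodic since $\lpf$ of the defining irreducible polynomial of each $\F_{p^{k_n}}$ equals $p^{k_n} \to \infty$. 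The colorings transfer to a Loeb-measurable partition $X = \bigsqcup_i E_i$, and pigeonhole produces $E := E_i$ with $\mu(E) \ge 1/r$.

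Next I would Fourier-expand the finite count on $\F_{p^{k_n}}$: orthogonality of additive characters yields
\[ \left| \left\{ (x,y,z) \in \left( C_i^{(n)} \right)^3 : P(x) - P(y) = Q(z) \right\} \right| = \frac{1}{p^{k_n}} \sum_\chi |S_P(\chi)|^2 \overline{S_Q(\chi)}, \]
where $S_P(\chi) = \sum_{x \in C_i^{(n)}} \chi(P(x))$ and $S_Q(\chi) = \sum_{z \in C_i^{(n)}} \chi(Q(z))$. The trivial character contributes a main term of $|C_i^{(n)}|^3/p^{k_n} \ge p^{2k_n}/r^3$, which by itself suffices to produce the desired count. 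Parseval combined with the fibering bound $|P^{-1}(u)| \le \deg P$ yields $\sum_\chi |S_P(\chi)|^2 \le (\deg P)\, p^{k_n} |C_i^{(n)}|$, so the nontrivial-character contribution is at most $(\deg P)\, |C_i^{(n)}| \sup_{\chi \ne 1} |S_Q(\chi)|$.

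The main obstacle is the uniform subtrivial bound $\sup_{\chi \ne 1}|S_Q(\chi)| = o(p^{k_n})$ on the restricted character sum. Applying Theorem \ref{thm: equivalences} to $Q$ with nontrivial characters as unit $L^2$-norm test functions gives $\sup_{\chi \ne 1} |\E_z \chi(Q(z))| = o(1)$, but this controls the \emph{unrestricted} sum rather than the restriction to $C_i^{(n)}$. To bridge this gap, I would work in the Loeb ultraproduct and exploit total ergodicity of the $\F_p[t]$-action together with the asymptotic equidistribution of $Q$: a mean ergodic / van der Corput argument shows that $z \mapsto \chi(Q(z))$ becomes asymptotically orthogonal to $\ind_E - \mu(E)$ in Loeb $L^2$, so that the normalized quantity $S_Q(\chi)/|C_i^{(n)}|$ decays uniformly in $\chi \ne 1$. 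Transferring back to the finite setting, the Fourier error becomes $o(p^{2k_n})$, the main term survives, and this contradicts the assumed $o(p^{2k_n})$ bound on the number of monochromatic solutions, completing the proof.
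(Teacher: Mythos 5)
Your setup (proof by contradiction, ultraproduct, Loeb measure) matches the paper's opening moves, and your Fourier identity for the number of solutions inside a single class $C_i^{(n)}$ is correct, but there is a genuine gap at exactly the point you flag: the uniform bound $\sup_{\chi \ne 1}\left| S_Q(\chi) \right| = o\left(p^{k_n}\right)$ for the character sum \emph{restricted} to the color class. Equidistribution of $Q$ (via Theorem \ref{thm: equivalences} or Proposition \ref{prop: character bound}) controls the unrestricted averages of $\chi(Q(z))$ over all of $\F_{p^{k_n}}$; it says nothing about the correlation of $z \mapsto \chi(Q(z))$ with an arbitrary dense set, and the proposed ``asymptotic orthogonality of $\chi(Q(\cdot))$ to $\ind_E - \mu(E)$'' is false. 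Already for $Q(z) = z$ and $E = \ker\chi$, an index-$p$ additive subgroup (a legitimate color class once $r \ge p$), one has $S_Q(\chi) = |E| = p^{k-1}$, so $S_Q(\chi)/|E| = 1$ does not decay, and no appeal to total ergodicity can help because $E$ is allowed to depend on $\chi$. More fundamentally, your reduction proves too much: you use only that the single class $E$ has density at least $1/r$, so a completed argument along these lines would show that \emph{every} set of positive density contains $\gg p^{2k}$ solutions of $P(x) - P(y) = Q(z)$ with all three variables in the set. Taking $P = Q = x^2$ (good for irrational equidistribution for $p > 2$), this is the density statement for the Pythagorean equation, which is known to fail (see the remark following Corollary \ref{cor: polynomial Schur} and \cite{dlms}). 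Hence no strengthening of the restricted character-sum estimate can rescue the single-large-class strategy.

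The paper's proof avoids this trap. After discarding the color classes of Loeb measure zero, it does not work inside one class: it forms the product set $A = P(C_1) \times \dots \times P(C_s)$ built from \emph{all} surviving classes, acts on $\F_{\infty}^s$ by the diagonal translations $T_{Q(z)}$, and applies the diagonal extension of the equidistribution theorem (Proposition \ref{prop: diagonal action}) together with Cauchy--Schwarz and the Loeb--Fubini theorem (Proposition \ref{prop: Loeb Fubini}) to get $\int_{\F_{\infty}} \mu^s\left( A \cap T_{Q(z)} A \right) d\mu(z) \ge \mu^s(A)^2 > 0$. Since the discarded classes have Loeb measure zero, a positive-measure set of such $z$ lies in some surviving class $C_{i(z)}$, and for that class the recurrence $\mu\left( A_{i(z)} \cap \left( A_{i(z)} + Q(z) \right) \right) > 0$ pulls back under $P$ to give a positive $\mu_V$-measure of monochromatic solutions, contradicting the counting claim (which itself uses Proposition \ref{prop: solutions sum of three polys} to know $|V_n| \approx (p^{k_n})^2$). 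It is this simultaneous treatment of all large classes --- which is what places $z$ in the same class as $x$ and $y$ --- together with the measure-zero discard (impossible with a finitary cutoff, as the paper's closing discussion explains) that your proposal is missing.
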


Since any polynomial with zero constant term is FF$_p$-intersective for every prime $p$, one application of note is a polynomial Schur theorem over finite fields:

\begin{corollary} \label{cor: polynomial Schur}
	Let $P(x) \in \Z[x]$ with $P(0) = 0$.
	Then for any $r \in \N$, there exists $N = N(P,r) \in \N$ and $c = c(P,r) > 0$ such that if $q > N$ and $\F_q = \bigcup_{i=1}^r{C_i}$, then there are at least $cq^2$ monochromatic solutions to the equation $P(x) + P(y) = P(z)$.
	In particular, if the coefficients of $P$ are not all divisible by the characteristic of $\F_q$, then there are $\gg_{P,r} q^2$ monochromatic solutions with $P(x), P(y), P(z) \ne 0$.
\end{corollary}

\begin{remark}
	In the case $P(x) = x^d$, Corollary \ref{cor: polynomial Schur} corresponds to the Fermat equation $x^d + y^d = z^d$.
 The easier problem (in comparison to partition regularity) of proving existence of solutions to the Fermat equation over finite fields has a long history and inspired many substantial developments in number theory.
 One fruitful point of view is to see the equation $x^d + y^d = z^d$ as an instance of a \emph{diagonal equation}, a family of polynomial equations dealt with systematically by Weil and for which very precise estimates on the number of solutions over finite fields can be obtained using exponential sums; see \cite{weil}.
 Earlier contributions to the problem of finding solutions to the Fermat equation over finite fields include those of Dickson, who dealt with special cases over prime fields using exponential sums in \cite{dickson1, dickson2}, and Schur, who proved existence of solutions (though without strong estimates on the number of solutions) over large prime fields using his eponymous partition regularity theorem in \cite{schur}.
 
	The much stronger property of partition regularity of the Fermat equation was established previously in the context of prime fields in \cite[Theorem 4]{cgs} and generalized to a family of related polynomial equations in \cite{lindqvist}.
	We complete the picture here by extending the partition regularity property to arbitrary finite fields of sufficiently large order (with no assumption on the characteristic).
	
	Related density results for Pythagorean pairs and triples in finite fields were obtained in \cite[Section 6]{dlms}, where the authors also show that a density version (``density regularity'') of Corollary \ref{cor: polynomial Schur} fails already for the Pythagorean equation $x^2 + y^2 = z^2$.
\end{remark}


\subsection{Quasi-randomness and asymptotic total ergodicity}

The results of this paper can be placed in a broader context, linking the combinatorial phenomenon of \emph{quasi-randomness} and the dynamical phenomenon of (asymptotic) \emph{total ergodicity}.
If one is interested in finding configurations of the form $\{x, x + P(y)\}$ in large subsets of a ring $R$, a natural combinatorial object to consider is the Cayley graph with vertex set $R$ and edges $E = \{\{a,b\} : b - a = P(x)~\text{for some}~x \in R, P(x) \ne 0\}$.
The independent sets in this graph correspond to subsets of $R$ avoiding configurations of the form $\{x, x+P(y)\}$.
Taking $R = \Z$ brings us to the setting of the classical Furstenberg--S\'{a}rk\"{o}zy theorem, and taking $R$ to be a finite field brings us to the setting of the present paper.
The strength of the bounds in Theorem \ref{thm: optimal power savings} and the availability of asymmetric forms of the Furstenberg--S\'{a}rk\"{o}zy theorem over finite fields (as in Theorem \ref{thm: A,B Sarkozy}) can be linked to the phenomenon of quasi-randomness, as we explain below.

For the sake of the present discussion, let us consider the polynomial $P(x) = x^2$.
The Cayley graph for $(\F_q, +)$ generated by the squares is called the \emph{Paley graph of order $q$}, named after the mathematician Raymond Edward Alan Christopher Paley for his construction of Hadamard matrices using properties of quadratic residues over finite fields \cite{paley}.\footnote{The complete story of how a family of graphs came to bear Paley's name is rather complicated and does not seem to be fully known. Paley's 1933 paper \cite{paley} did not involve any graphs, nor did subsequent work on Hadamard matrices by his contemporaries (e.g. \cite{todd, coxeter}). The graphs now known as Paley graphs were first defined independently by Sachs \cite{sachs} and by Erd\H{o}s and R\'{e}nyi \cite{er} in the early 1960s, but no name was assigned to the family of graphs in their papers. By the 1970s, the term ``Paley graph'' had become standard and appeared in the book of Cameron and van Lindt \cite{cvl} in 1975 without any explanation regarding the source of the name. Gareth A. Jones has documented much of the history of Paley graphs and their attribution, and we invite the reader to explore his paper \cite{jones_Paley}, from which we have drawn our summary here.}
To be precise, the Paley graph of order $q$ is the graph $\bfP_q$ with vertex set $\F_q$ and edges $\{a,b\}$ if and only if $b - a$ is a nonzero square.
(One typically assumes $q \equiv 1 \pmod{4}$ so that $b-a$ is a square if and only if $a-b$ is a square.)
The family of Paley graphs is an example of a \emph{quasi-random} family.
A sequence of graphs $G_n = (V_n, E_n)$ with $N_n$ vertices and edge density $p_n = |E_n|/\binom{N_n}{2}$ is \emph{quasi-random} if for every $n \in \N$ and every pair of subsets $A, B \subseteq V_n$,
\begin{equation*}
	\left| \left\{ \{a,b\} \in E_n : a \in A, b \in B \right\} \right| = p_n |A| |B| + o(N_n^2).
\end{equation*}
Quasi-random graphs were introduced by Chung, Graham, and Wilson in \cite{cgw}, where the authors provided several equivalent characterizations of quasi-randomness and proved that Paley graphs are quasi-random.

The quasi-randomness of the family of Paley graphs is equivalent to the estimate
\begin{equation} \label{eq: Paley quasi-randomness}
	\left| \left\{ (x,y) \in \F_q \times \F_q : x \in A, x + y^2 \in B \right\} \right| = |A| |B| + o(q^2)
\end{equation}
for $A, B \subseteq \F_q$, since $\bfP_q$ has edge density $p = \frac{1}{2}$ and the quantity on the left hand side of \eqref{eq: Paley quasi-randomness} counts each edge between $A$ and $B$ twice.
Item (iv) of Theorem \ref{thm: A,B Sarkozy} can thus be seen as a generalization of \eqref{eq: Paley quasi-randomness}, establishing a connection between irrational equidistribution (via property (i) in Theorem \ref{thm: A,B Sarkozy}) and quasi-randomness.
Another simple consequence of quasi-randomness is that quasi-random graphs cannot have large independent sets (see, e.g., \cite[Proposition 4.5]{ks}), which leads to the strong power-saving bounds as in Theorem \ref{thm: optimal power savings}.

Some of the above-described combinatorial results in the finite field setting (in particular, an asymmetric form of the Furstenberg--S\'{a}rk\"{o}zy theorem and power-saving bounds for several variations of the Furstenberg--S\'{a}rk\"{o}zy theorem) do not have natural analogues in the integers.
One may ask: from the point of view of quasi-randomness, what is the essential difference between the integers and a finite field?
The answer to this question hinges on a surprising connection to dynamics.
In a companion paper \cite{ab_total-ergodicity}, we show that quasi-randomness of generalized Paley graphs associated with a sequence of finite commutative rings $(R_n)_{n \in \N}$ is closely related to \emph{asymptotic total ergodicity} of the sequence of rings.\footnote{We do not give a full definition of asymptotic total ergodicity here, as it would take us too far astray.
The notion of asymptotic total ergodicity comes as a finitization of the phenomenon of total ergodicity in ergodic theory and was previously defined for modular rings in \cite{bb_TE}.}
We also establish extensions of Theorems \ref{thm: FF intersective} and \ref{thm: A,B Sarkozy} for asymptotically totally ergodic sequences of rings as manifestations of quasi-randomness.
We refer the reader to \cite{ab_total-ergodicity} for more details and for precise statements of the results alluded to here.


\subsection{Outline of the paper}

We prove the main algebraic lemma, Lemma \ref{lem: Euclidean}, in Section \ref{sec: additive core}.
The main exponential sum estimate of the paper (Theorem \ref{thm: exponential sum}) is proved in Section \ref{sec: Weil}.
The remaining four sections address combinatorial applications.
We prove a power saving bound for the Furstenberg--S\'{a}rk\"{o}zy theorem over finite fields (Theorem \ref{thm: optimal power savings}) and provide necessary and sufficient conditions for a polynomial to satisfy the Furstenberg--S\'{a}rk\"{o}zy theorem over finite fields (Theorem \ref{thm: FF intersective}) in Section \ref{sec: power saving}.
In Section \ref{sec: irrational equidistribution}, we prove Theorem \ref{thm: irrational equidistribution} as a crucial ingredient for proving necessary and sufficient conditions for an asymmetric form of the Furstenberg--S\'{a}rk\"{o}zy theorem over finite fields (Theorem \ref{thm: A,B Sarkozy}) in Section \ref{sec: A,B Sarkozy}.
The final section, Section \ref{sec: partition regularity}, is concerned with partition regularity of polynomial equations.


\section{Additive core of polynomials over $\F_p$} \label{sec: additive core}

In this short section, we prove Lemma \ref{lem: Euclidean}, which will serve as an important algebraic tool for several of the later results of the paper.
Recall the statement of Lemma \ref{lem: Euclidean}:

\Euclidean*

\begin{proof}
	It suffices to prove the $n = 2$ case, since the general case easily follows by induction.
	
	If $\eta_i = 0$ for some $i \in \{1, 2\}$, then take $\eta = \eta_j$ with $j \ne i$.
	
	Suppose now that $\eta_1$ and $\eta_2$ are both nonzero.
	Write $\eta_1(x) = \sum_{i=0}^m{a_i x^{p^i}}$ and $\eta_2(x) = \sum_{j=0}^l{b_j x^{p^j}}$.
	Without loss of generality, $m \ge l$.
	Define
	\begin{equation} \label{eq: degree reduction}
		\eta'_1(x) = b_l \eta_1(x) - a_m \eta_2 \left( x^{p^{m-l}} \right)
		 = \eta_1(b_lx) + \eta_2 \left( -a_m x^{p^{m-l}} \right),
	\end{equation}
	and let $H'_1 = \eta'_1(\overline{\F}_p)$.
	Then $\deg{\eta'_1} < \deg{\eta_1}$. \\
	
	\underline{Claim}: $H'_1 + H_2 = H_1 + H_2$.
	
	Since $H_1, H_2$, and $H'_1$ are all subgroups of $\overline{\F}_p$, it suffices to show that $H'_1 \subseteq H_1 + H_2$ and $H_1 \subseteq H'_1 + H_2$.
	For any $x \in \overline{\F}_p$, \eqref{eq: degree reduction} expresses $\eta'_1(x)$ as a sum of an element of $H_1$ and an element of $H_2$.
	Hence, $H'_1 \subseteq H_1 + H_2$.
	Rearranging \eqref{eq: degree reduction}, we have
	\begin{equation*}
		\eta_1(x) = b_l^{-1} \eta'_1(x) + b_l^{-1} a_m \eta_2 \left( x^{p^{m-l}} \right).
	\end{equation*}
	Thus, $H_1 \subseteq H'_1 + H_2$.
	This proves the claim. \\
	
	We have shown that, given any nonzero additive polynomials $\eta_1, \eta_2 \in \F_p[x]$, we may find $\eta'_1, \eta'_2 \in \F_p[x]$ with $\eta'_1(\overline{\F}_p) + \eta'_2(\overline{\F}_p) = \eta_1(\overline{\F}_p) + \eta_2(\overline{\F}_p)$ such that $\deg{\eta'_1} + \deg{\eta'_2} < \deg{\eta_1} + \deg{\eta_2}$, and $\eta'_1$ and $\eta'_2$ are of the appropriate form.
	Repeating this process finitely many times, we eventually reduce to the situation that one of the additively polynomials is zero.
	We then take $\eta$ to be the remaining nonzero polynomial.
\end{proof}

\begin{remark} \label{rem: additive core via other rings}
    Let $\eta_1, \ldots, \eta_n$ and $\eta$ be as in Lemma \ref{lem: Euclidean}.
    For a given commutative ring $R$ of characteristic $p$, let $H_i(R) = \eta_i(R) \le (R,+)$ and $H(R) = \sum_{i=1}^n H_i(R)$.
    Following the same argument as in the proof of Lemma \ref{lem: Euclidean} above, we have $\eta(R) = H(R)$ for every commutative ring $R$ of characteristic $p$.
    In fact, if $I \le R$ is an ideal, then we also have $\eta(I) = H(I)$.
    In particular, $\eta(g\F_p[t]) = \sum_{i=1}^n \eta_i(g\F_p[t])$ for every $g \in \F_p[t]$.
\end{remark}

The argument in the proof of Lemma \ref{lem: Euclidean} provides an algorithm for obtaining $\eta$ that bears a strong resemblance with the Euclidean algorithm.
We work through a few simple examples to see more concretely how the algorithm works.

\begin{example} \label{eg: Euclidean}
	(1) $\eta_1(x) = x^{p^2} - x$, $\eta_2(x) = x^{p^3} + x^p$.
	The polynomial $\eta_2$ has larger degree, so we shift the exponents of $\eta_1$ to match the degree of $\eta_2$ and subtract:
	\begin{equation*}
		\eta'_2(x) = \eta_2(x) - \eta_1(x^p) = 2x^p.
	\end{equation*}
	If $p = 2$, then $\eta'_2(x) = 0$, so we stop, and the resulting polynomial $\eta$ is simply $\eta_1$.
	(Note that when $p = 2$, $\eta_1$ may be rewritten as $\eta_1(x) = x^{p^2} + x$, and then it is clear that $\eta_2(x) = \eta_1(x^p)$, so the image of $\eta_2$ is manifestly a subset of the image of $\eta_1$.)
	Suppose $p > 2$.
	Then $\deg{\eta_1} > \deg{\eta'_2}$, so we shift the exponents of $\eta'_2$ and subtract:
	\begin{equation*}
		\eta'_1(x) = 2 \eta_1(x) - \eta'_2(x^p) = -2x.
	\end{equation*}
	Since $p > 2$, the element $-2 \in \F_p$ is invertible, so the image of $\eta'_1$ is all of $\overline{\F}_p$, and we are done: $\eta(x) = \eta'_1(x) = -2x$.
	(One can check that applying one more step of the algorithm would result in $\eta''_2 = 0$, indicating that the process has terminated.)
	
	(2) $\eta_1(x) = x^{p^3} + x^{p^2} + x^p$, $\eta_2(x) = x^{p^2}$.
	First, shifting $\eta_2$ and subtracting, we have
	\begin{equation*}
		\eta'_1(x) = \eta_1(x) - \eta_2(x^p) = x^{p^2} + x^p.
	\end{equation*}
	Next, subtracting $\eta_2$ without any shifting gives
	\begin{equation*}
		\eta''_1(x) = \eta'_1(x) - \eta_2(x) = x^p.
	\end{equation*}
	Shifting $\eta''_1$ and subtracting from $\eta_2$ produces $\eta'_2 = 0$, so we are done and $\eta(x) = \eta''_1(x) = x^p$.
\end{example}


\section{Exponential sum bound} \label{sec: Weil}

The goal of this section is to prove the exponential sum bound (Theorem \ref{thm: exponential sum}). As preparation, we recall basic notions from Fourier analysis on finite fields.

Fix a prime $p$ and $k \in \N$.
The \emph{trace} $\textup{Tr} : \F_{p^k} \to \F_p$ is the $\F_p$-linear map $\textup{Tr}(x) = x + x^p + \dots + x^{p^{k-1}}$.
Let $e_{p^k} : \F_{p^k} \to \C$ be the group homomorphism $e_{p^k}(x) = \exp \left( \frac{2 \pi i \cdot \textup{Tr}(x)}{p} \right)$.
When it is clear from context, we will drop the subscript and simply write $e$ for the function $e_{p^k}$.
Additive characters on $\F_{p^k}$ take the form $x \mapsto e(\xi x)$ for $\xi \in \F_{p^k}$; see \cite[Proposition 1.13]{kowalski}.

Using this isomorphism between $\F_{p^k}$ and it dual $\hat{\F}_{p^k}$, we define the \emph{Fourier transform} of a function $f : \F_{p^k} \to \C$ to be the function $\hat{f} : \F_{p^k} \to \C$ given by
\begin{equation*}
	\hat{f}(\xi) = \E_{x \in \F_{p^k}} f(x) e(-\xi x).
\end{equation*}
The Fourier transform has the following basic properties:
\begin{itemize}
	\item	Fourier inversion formula:
		\begin{equation*}
			f(x) = \sum_{\xi \in \F_{p^k}}{\hat{f}(\xi) e(\xi x)}
		\end{equation*}
	\item	Parseval's identity:
		\begin{equation*}
			\E_{x \in \F_{p^k}}{\left| f(x) \right|^2} = \sum_{\xi \in \F_{p^k}}{\left| \hat{f}(\xi) \right|^2}.
		\end{equation*}
\end{itemize}

With the notation above, we now recall the Weil bound:

\begin{theorem}[Weil bound, cf. \cite{kowalski}, Theorem 3.2] \label{thm: Weil}
	Let $q$ be any prime power.
	Let $P(x) \in \F_q[x]$ be a polynomial of degree $d$.
	If $d < q$ and $\gcd(d,q) = 1$, then for any $\xi \in \F_q \setminus \{0\}$, one has
	\begin{equation*}
		\left| \E_{x \in \F_q} e \left( \xi P(x) \right) \right| \le (d-1) q^{-1/2}
	\end{equation*}
\end{theorem}

To prove Theorem \ref{thm: exponential sum}, we will combine the Weil bound with algebraic information about a polynomial encoded in its additive core.
This immediately leads to a stronger version of Theorem \ref{thm: exponential sum} that gives additional information about when the character sum is nontrivial:

\begin{theorem} \label{thm: exponential sum 2}
	Let $P(x) \in \F_p[x]$ be a polynomial of degree $d$.
	Let $\eta$ be the additive core of $P$, and let $a_0 = P(0)$.
	Then for any $k \in \N$,
	\begin{enumerate}[(1)]
		\item	$H_k = \eta(\F_{p^k})$ is the group generated by $\{P(x) - a_0 : x \in \F_{p^k}\}$, and
		\item	for any $\xi \in \F_{p^k}$,
			\begin{equation*}
				\left| \E_{x \in \F_{p^k}} e(\xi P(x)) - e(\xi a_0) \ind_{H_k^{\perp}}(\xi) \right| \le (d-1) p^{-k/2}.
			\end{equation*}
	\end{enumerate}
\end{theorem}

\begin{proof}[Proof of Theorem \ref{thm: exponential sum} assuming Theorem \ref{thm: exponential sum 2}]
	Let $\chi : \F_{p^k} \to \C$ be an additive character.
	Write $\chi(x) = e(\xi x)$ for some $\xi \in \F_{p^k}$.
	
	If $\xi \in H_k^{\perp}$, then since $P(x) - a_0 \in H_k$ by (1) in Theorem \ref{thm: exponential sum 2}, we have $e(\xi P(x)) = e(\xi a_0)$ for $x \in \F_{p^k}$.
	Hence,
	\begin{equation*}
		\left| \sum_{x \in \F_{p^k}} \chi(P(x)) \right| = \left| p^k e(\xi a_0) \right| = p^k.
	\end{equation*}
	
	If $\xi \notin H_k^{\perp}$, then by (2) in Theorem \ref{thm: exponential sum 2}, we have
	\begin{equation*}
		\left| \sum_{x \in \F_{p^k}} \chi(P(x)) \right| = \left| p^k \E_{x \in \F_{p^k}} e(\xi P(x)) \right| \le (d-1) p^{k/2}.
	\end{equation*}
\end{proof}

\begin{proof}[Proof of Theorem \ref{thm: exponential sum 2}]
	Write $P(x) = a_0 + \sum_{i=1}^n \eta_i(x^{r_i})$ with $\eta_i$ additive and $x^{r_i}$ distinct and separable.
	Let $H_{k,i} = \eta_i(\F_{p^k})$.
	Then by the definition of the additive core $\eta$, we have $H_k = \sum_{i=1}^n H_{k,i}$, so clearly $P(x) - a_0 \in H_k$.
	If the group $\left\langle P(x) - a_0 : x \in \F_{p^k} \right\rangle$ is a proper subgroup of $H_k$, then $H_k^{\perp} \subsetneq \left\langle P(x) - a_0 : x \in \F_{p^k} \right\rangle^{\perp}$, so there is exists $\xi \in \F_{p^k}$ such that $e(\xi P(x)) = e(\xi a_0)$ for every $x \in \F_{p^k}$ but $\xi \notin H_k^{\perp}$.
	Therefore, (1) follows from (2), so we will prove (2) directly.
	
	Fix $\xi \in \F_{p^k}$.
	If $\xi \in H_k^{\perp}$, then $e(\xi P(x)) = e(\xi a_0)$ for every $x \in \F_{p^k}$, so
	\begin{equation*}
		\left| \E_{x \in \F_{p^k}} e(\xi P(x)) - e(\xi a_0) \ind_{H_k^{\perp}}(\xi) \right| = 0.
	\end{equation*}
	Suppose $\xi \notin H_k^{\perp}$.
	For each $i \in \{1, \dots, n\}$, the map $x \mapsto e(\xi \eta_i(x))$ is again an additive character on $\F_{p^k}$, so there exists $c_i \in \F_{p^k}$ such that $e(\xi \eta_i(x)) = e(c_i x)$.
	Thus,
	\begin{equation*}
		e(\xi P(x)) = e(\xi a_0) e \left( \sum_{i=1}^n{c_i x^{r_i}} \right).
	\end{equation*}
	Since $H_k^{\perp} = \bigcap_{i=1}^n H_{k,i}^{\perp}$, we have $c_i \ne 0$ for some $i \in \{1, \dots, n\}$.
	Moreover, $p \nmid r_i$ for $i \in \{1, \dots, n\}$, so
	\begin{equation*}
		\left| \sum_{x \in \F_{p^k}} e(\xi P(x)) \right| = \left| \sum_{x \in \F_{p^k}} e \left( \sum_{i=1}^n{c_i x^{r_i}} \right) \right| \le (d-1) p^{k/2}
	\end{equation*}
	by Theorem \ref{thm: Weil}.
\end{proof}

\begin{corollary} \label{cor: projection}
	Let $P(x) \in \F_p[x]$ be a nonconstant polynomial, and let $\eta$ be its additive core.
	Then for any $k \in \N$ and any $f : \F_{p^k} \to \C$,
	\begin{equation*}
		\norm{L^2(\F_{p^k})}{\E_{y \in \F_{p^k}} f(x + P(y)) - \E_{z \in H_k} f(x + a_0 + z)} \le (d-1) p^{-k/2} \norm{L^2(\F_{p^k})}{f},
	\end{equation*}
	where $H_k = \eta(\F_{p^k})$ and $a_0 = P(0)$.
\end{corollary}

\begin{proof}
	Let $F(x) = \E_{y \in \F_{p^k}} f(x + P(y)) - \E_{z \in H_k} f(x + a_0 + z)$.
	Then by direct calculation,
	\begin{multline*}
		\hat{F}(\xi) = \E_{x \in \F_{p^k}} F(x) e(-\xi x) \\
		= \E_{x \in \F_{p^k}}  \E_{y \in \F_{p^k}} f(x) e(- \xi x) e(\xi P(y)) - \E_{x \in \F_{p^k}} \E_{z \in H_k} f(x) e(-\xi x) e(\xi a_0) e(\xi z) \\
		 = \hat{f}(\xi) \left( \E_{y \in \F_{p^k}} e(\xi P(y)) - e(\xi a_0) \ind_{H_k^{\perp}}(\xi) \right).
	\end{multline*}
	Therefore, by Theorem \ref{thm: exponential sum 2},
	\begin{equation*}
		\left| \hat{F}(\xi) \right| \le (d-1) p^{-k/2} \left| \hat{f}(\xi) \right|.
	\end{equation*}
	Thus, by Parseval's identity, we have
	\begin{equation*}
		\norm{L^2(\F_{p^k})}{F} \le (d-1) p^{k/2} \left( \sum_{\xi \in \F_{p^k}} \left| \hat{f}(\xi) \right|^2 \right)^{1/2} = (d-1) p^{-k/2} \norm{L^2(\F_{p^k})}{f}.
	\end{equation*}
\end{proof}


\section{Power saving bound for the Furstenberg--S\'{a}rk\"{o}zy theorem in characteristic $p$} \label{sec: power saving}

Our first combinatorial application of Theorem \ref{thm: exponential sum} is a power-saving bound for the Furstenberg--S\'{a}rk\"{o}zy theorem over finite fields of characteristic $p$.
Theorem \ref{thm: optimal power savings}, which deals with polynomials with zero constant term, is a special case of Theorem \ref{thm: FF intersective}, so we will only prove Theorem \ref{thm: FF intersective}, restated below for convenience:

\FFIntersective*

\begin{proof}[Proof of Theorem \ref{thm: FF intersective}]
	Consider the additional statement
	\begin{enumerate}
		\item[(iv)]	the group $\langle P(x) - a_0 : x \in \F_{p^k} \rangle \le (\F_{p^k}, +)$ contains $a_0$ for all large $k \in \N$.
	\end{enumerate}
	First we will show that items (i), (ii), and (iv) are equivalent. \\
	
	(i) $\implies$ (iv).
	We will prove the contrapositive.
	Suppose (iv) fails.
	Let $k_i$ be an increasing sequence such that $a_0 \notin \langle P(x) - a_0 : x \in \F_{p^{k_i}} \rangle$.
	Let $A = \langle P(x) - a_0 : x \in \F_{p^{k_i}} \rangle \subseteq \F_{p^{k_i}}$, and note that $|A| \ge \left| P \left( \F_{p^{k_i}} \right) \right| \ge \frac{p^{k_i}}{d}$.
	For any $a, b \in A$, we have $b - a \in A$.
	On the other hand, for any $x \in \F_{p^k}$, we have $P(x) \in a_0 + A$, so $A$ does not contain $a, b$ with $b - a = P(x)$.
	Thus, (i) fails for $\delta = \frac{1}{d}$. \\
	
	(iv) $\implies$ (ii).
	Let $A \subseteq \F_{p^k}$ and suppose $A$ does not contain distinct $a, b \in A$ with $b - a = P(x)$ for some $x \in \F_{p^k}$.
	Then
	\begin{equation*}
		\Lambda(A) = \E_{x,y \in \F_{p^k}} \ind_A(x) \ind_A(x + P(y)) = \frac{\left| \left\{ y \in \F_{p^k} : P(y) = 0 \right\} \right|}{p^k} \frac{|A|}{p^k} \le d p^{-2k} |A|.
	\end{equation*}
	However, by Corollary \ref{cor: projection} and the Cauchy--Schwarz inequality,
	\begin{equation*}
		\left| \Lambda(A) - \E_{x \in \F_{p^k}, z \in H_k} \ind_A(x) \ind_A(x + a_0 + z) \right| \le (d-1) p^{-k/2} \norm{L^2(\F_{p^k})}{\ind_A}^2 = (d-1) p^{-3k/2} |A|
	\end{equation*}
	and since $a_0 \in H$, we have
	\begin{equation*}
		\E_{x \in \F_{p^k}, z \in H_k} \ind_A(x) \ind_A(x + a_0 + z) = \E_{x \in \F_{p^k}, z \in H_k} \ind_A(x) \ind_A(x + z) \ge p^{-2k} |A|^2.
	\end{equation*}
	Thus, $p^{-2k} |A|^2 \le dp^{-2k}|A| + (d-1) p^{-3k/2}|A|$, which after rearranging results in $|A| \ll_d p^{k/2}$. \\
	
	(ii) $\implies$ (i) is trivial. \\
	
	In order to prove the equivalence between the two algebraic conditions (iii) and (iv), we first make a couple of observations.
	If $a_0 = 0$, then (iii) and (iv) both hold, so we will assume $a_0 \ne 0$.
	Now, the group $H_k := \langle P(x) - a_0 : x \in \F_{p^k} \rangle \le (\F_{p^k}, +)$ is equal to $\eta(\F_{p^k})$ by Theorem \ref{thm: exponential sum 2}(1).
	Moreover, $\eta$ is $\F_p$-linear, so $H_k$ contains $a_0$ if and only if $\eta - 1$ has a root in $\F_{p^k}$.
	It therefore suffices to prove $\eta - 1$ has a root in $\F_{p^k}$ for all large $k \in \N$ if and only if $\eta(1) \ne 0$.
	
	Suppose $c = \eta(1) \ne 0$.
	Then since $\eta$ is $\F_p$-linear, we have $\eta(c^{-1}) = c^{-1} \eta(1) = 1$, so $c^{-1}$ is a root of $\eta - 1$.
	
	Conversely, let $P = \eta - 1$, and suppose $R = \left\{ k \in \N : P~\text{has a root in}~\F_{p^k} \right\}$ is cofinite.
	Note that we can equivalently express $R$ as the set of $k \in \N$ for which $\gcd(P, x^{p^k}-x) \ne 1$, since $x^{p^k} - x = 0$ for $x \in \F_{p^k}$.
	The polynomials $Q_k(x) = x^{p^k} - x$ have the property $\gcd(Q_k, Q_l) = Q_{\gcd(k,l)}$.
	In particular, if $q_1, q_2 \in \P$ are distinct prime numbers, then $\gcd(Q_{q_1}, Q_{q_2}) = x^p - x$.
	Since $P$ has only finitely many irreducible factors, $\gcd(P, Q_k)$ takes only finitely many values, so by the pigeonhole principle, there is a nonconstant polynomial $D(x) \in \F_p[x]$ such that the set $\{k \in R \cap \P : \gcd(P, Q_k) = D\}$ is infinite.
	But then $D \mid Q_q$ for infinitely many $q \in \P$, which implies $D \mid x^p - x$.
	Thus, $\gcd(P, x^p - x) \ne 1$.
	Equivalently, $P$ has a root in $\F_p$, say $P(c) = 0$.
	Then $\eta(1) = c^{-1} \eta(c) = c^{-1} (P(c)+1) = c^{-1} \ne 0$.
\end{proof}

Properly interpreting known bounds on parameters of generalized Paley graphs gives a complementary lower bound, showing that the exponent in item (ii) in Theorem \ref{thm: FF intersective} cannot be improved.

\begin{proposition} \label{prop: sharpness}
	If $q$ is a square and $d \mid \sqrt{q} + 1$, then there exists a subset $A \subseteq \F_q$ such that $|A| = \sqrt{q}$ and $A$ does not contain any distinct elements whose difference is a $d$th power.
\end{proposition}

\begin{proof}
	Consider the generalized Paley graph $\bfP(q, d)$ with vertex set $V = \F_q$ and edges $\{a, b\} \in E$ if and only if $b - a = x^d$ for some $x \in \F_q$.
	Note that independent sets in $\bfP(q, d)$ correspond to subsets of $\F_q$ with no $d$th power differences.
	We therefore want to show that $\bfP(q, d)$ has an independent set of size $\sqrt{q}$.
	Under the assumption $d \mid \sqrt{q} + 1$, the chromatic number of $\bfP(q, d)$ is equal to $\sqrt{q}$ by \cite[Theorem 1]{bdr}.
	But this means that $\F_q$ can be partitioned into a collection of $\sqrt{q}$ independent sets, so there must be an independent set of size at least $\frac{q}{\sqrt{q}} = \sqrt{q}$.
\end{proof}


\section{Irrational equidistribution for polynomials over $\F_p$} \label{sec: irrational equidistribution}

As preparation for our next combinatorial application (Theorem \ref{thm: A,B Sarkozy}), we prove Theorem \ref{thm: irrational equidistribution}, reproduced below, which gives a simple characterization of when a polynomial is good for irrational equidistribution (see Definition \ref{defn: characters} for the definition).

\IrrationalEquidistribution*

To prove Theorem \ref{thm: irrational equidistribution}, we will combine Lemma \ref{lem: Euclidean} (proved in Section \ref{sec: additive core}) with a general Weyl-type equidistribution theorem from \cite{bl}.
Let us first introduce some notation.
Let $\F_p(t) = \left\{ f/g : f, g \in \F_p[t], g \ne 0 \right\}$ be the field of rational functions over $\F_p$.
We define an absolute value on $\F_p(t)$ by $|f/g| = p^{\deg{f} - \deg{g}}$ with the convention that $\deg{0} = -\infty$.
The completion of $\F_p(t)$ with respect to the metric induced by $|\cdot|$ is the field of formal Laurent series $\F_p((t^{-1})) = \left\{ \sum_{n=-\infty}^N c_n t^n : N \in \Z, c_n \in \F_p \right\}$.
We call an element $\alpha \in \F_p((t^{-1}))$ \emph{rational} if $\alpha \in \F_p(t)$ and \emph{irrational} otherwise.
Rational elements of $\F_p((t^{-1}))$ share many of the familiar properties of rational numbers:

\begin{proposition}
	Let $\alpha = \sum_{n=-\infty}^N c_n t^n \in \F_p((t^{-1}))$.
	The following are equivalent:
	\begin{enumerate}[(i)]
		\item	$\alpha \in \F_p(t)$;
		\item	the sequence of ``digits'' $(c_n)_{-\infty < n \le N}$ is eventually periodic: there exists $M \in \Z$ and $q \in \N$ such that $c_{n-q} = c_n$ for all $n \le M$;
		\item	the sequence $(f\alpha)_{f \in \F_p[t]}$ is periodic mod $\F_p[t]$: there exists $g \in \F_p[t] \setminus \{0\}$ such that for any $f, h \in \F_p[t]$, one has $(f + gh)\alpha - f\alpha \in \F_p[t]$;
		\item	the sequence $(f\alpha)_{f \in \F_p[t]}$ has finitely many elements mod $\F_p[t]$: there exists $k \in \N$ and elements $\beta_1, \dots, \beta_k \in \F_p((t^{-1}))$ such that for any $f \in \F_p[t]$, there exists $i \in \{1, \dots, k\}$ such that $f\alpha - \beta_i \in \F_p[t]$.
	\end{enumerate}
\end{proposition}

\begin{proof}
	(i) $\implies$ (iii).
	Write $\alpha = \frac{f}{g}$ with $f, g \in \F_p[t]$, $g \ne 0$.
	Then for any $h_1, h_2 \in \F_p[t]$, we have $(h_1 + gh_2)\alpha - h_1 \alpha = fh_2 \in \F_p[t]$.
	
	(iii) $\implies$ (iv).
	Let $g$ be as in (iii), and let $h_1, \dots, h_k$ be the finitely many elements $h_i \in \F_p[t]$ such that $|h_i| < |g|$.
	Put $\beta_i = h_i \alpha$.
	Let $f \in \F_p[t]$.
    The remainder from the division of $f$ by $g$ is an element of $\F_p[t]$ of size smaller than $g$, so it is equal to $h_i$ for some $i \in \{1, \dots, k\}$.
    Hence, $f\alpha - \beta_i = (f - h_i)\alpha \in \F_p[t]$, since $f - h_i$ is divisible by $g$.
	
	(iv) $\implies$ (ii).
	Note that $t^m \alpha = \sum_{n=-\infty}^{N+m} c_{n-m} t^n$.
	By (iv), the sequence $(t^m\alpha)_{m \in \N}$ has only finitely many elements mod $\F_p[t]$, so let $m_1 < m_2$ such that $t^{m_1} \alpha - t^{m_2} \alpha \in \F_p[t]$.
	Then comparing coefficients, we have $(c_{-(m_1+1)}, c_{-(m_1+2)}, \dots) = (c_{-(m_2+1)}, c_{-(m_2+2)}, \dots)$.
	Thus for $M = -(m_1 + 1)$ and $q = m_2 - m_1$, we have $c_{n-q} = c_n$ for all $n \le M$.
	
	(ii) $\implies$ (i).
	Let $M \in \Z$ and $q \in \N$ such that $c_{n-q} = c_n$ for $n \le M$.
	We can then write
	\begin{multline*}
		\alpha = \sum_{n=M+1}^N c_n t^n + \left( c_M \left( t^M + t^{M-q} + t^{M-2q} + \dots \right) + \dots + c_{M-q+1} \left( t^{M-q+1} + t^{M-2q+1} + t^{M-3q+1} + \dots \right) \right) \\
		 = \sum_{n=M+1}^N c_n t^n + \left( c_M t^M + \dots + c_{M-q+1} t^{M-q+1} \right) \frac{t^q}{t^q - 1} \in \F_p(t).
	\end{multline*}
\end{proof}

There is an isomorphism between the dual group $\hat{\F_p[t]}$ of additive characters on $\F_p[t]$ and the characteristic $p$ ``torus'' $\F_p((t^{-1}))/\F_p[t]$.
Indeed, every character on $\F_p[t]$ is of the form $f \mapsto e(\alpha f)$ for some $\alpha \in \F_p((t^{-1}))/\F_p[t]$, where $e \left( \sum_{n=-\infty}^N c_n \right) = \exp \left( \frac{2\pi i c_{-1}}{p} \right)$.
(Given any $p$th root of unity $\omega$, one can define $e_{\omega} \left( \sum_{n=-\infty}^N c_n \right) = \omega^{c_{-1}}$ and obtain in this way another isomorphism between $\hat{\F_p[t]}$ and $\F_p((t^{-1}))/\F_p$.
Changing the choice of $\omega$ does not impact the discussion below.)
A key property of this isomorphism for our purposes is that a character $\chi(f) = e(\alpha f)$ is rational (see Definition \ref{defn: characters}) if and only if $\alpha \in \F_p(t)$ is a rational element.

A function $a : \F_p[t] \to \F_p((t^{-1}))$ is \emph{uniformly distributed mod $\F_p[t]$} if for any continuous function $F : \F_p((t^{-1}))/\F_p[t]$, one has
\begin{equation*}
	\lim_{N \to \infty} \E_{f \in \M_N} F(a(f)) = \int_{\F_p((t^{-1}))/\F_p[t]} F~dm,
\end{equation*}
where $m$ is the Haar probability measure on $\F_p((t^{-1}))/\F_p[t]$.
Equivalently (by the Stone--Weierstrass theorem),
\begin{equation*}
	\lim_{N \to \infty} \E_{f \in \M_N} e(g a(f)) = 0
\end{equation*}
for every $g \in \F_p[t]$.
Thus, we see that a polynomial $P(x) \in \F_p[x]$ is good for irrational equidistribution if and only if $(P(f)\alpha)_{f \in \F_p[t]}$ is uniformly distributed mod $\F_p[t]$ for every irrational $\alpha \in \F_p((t^{-1})) \setminus \F_p(t)$.
(This is the source of our terminology ``good for irrational equidistribution.'')

The main result of \cite{bl} gives a description of the equidistributional behavior of polynomial sequences $P(x) \in \F_p((t^{-1}))[x]$.
In order to state and use this theorem, we first introduce some notation and a description of the behavior of additive polynomial sequences.

For any additive polynomial $\eta(x) \in \F_p((t^{-1}))[x]$, there is a closed subgroup $\mathcal{F}(\eta) \le \F_p((t^{-1}))/\F_p[t]$ such that the closure $\overline{\eta(\F_p[t])}$ of the image of $\eta$ in $\F_p((t^{-1}))/\F_p[t]$ takes the form $\mathcal{F}(\eta) + \eta(K)$ for some finite subset $K \subseteq \F_p[t]$ such that $\eta(K)$ is a finite subgroup of $\F_p((t^{-1}))/\F_p[t]$.
The group $\mathcal{F}(\eta)$ can be obtained explicitly as
\begin{equation*}
    \mathcal{F}(\eta) = \bigcap_{g \in \F_p[t] \setminus \{0\}} \overline{\eta(g\F_p[t])}.
\end{equation*}
In fact, there exists $g_0 \in \F_p[t] \setminus \{0\}$ such that if $g \in \F_p[t] \setminus \{0\}$ and $g_0 \mid g$, then
\begin{equation*}
    \mathcal{F}(\eta) = \overline{\eta(g\F_p[t])}.
\end{equation*}
(This description of $\mathcal{F}(\eta)$ can be gleaned from \cite[Section 7]{bl}.)

Given a polynomial $P(x) = \alpha_0 + \sum_{i=1}^n \eta_i(x^{r_i}) \in \F_p((t^{-1}))[x]$, we put $\mathcal{F}(P) = \sum_{i=1}^n \mathcal{F}(\eta_i)$.
The main theorem from \cite{bl} has the following consequence, as described in \cite[Section 3]{ab_corrigendum}:

\begin{theorem} \label{thm: Weyl}~
	Let $P(x) \in \F_p((t^{-1}))[x]$.
    If $\mathcal{F}(P) = \F_p((t^{-1}))/\F_p[t]$, then $(P(f))_{f \in \F_p[t]}$ is uniformly distributed mod $\F_p[t]$.
\end{theorem}

For $P(x) \in \F_p[t]$ and $\alpha \in \F_p((t^{-1}))$, the next lemma describes the group $\mathcal{F}(P\alpha)$ appearing in Theorem \ref{thm: Weyl} in terms of the additive core of $P$.

\begin{lemma} \label{lem: F-subtorus additive core}
    Let $P(x) \in \F_p[x]$, and let $\eta$ be its additive core.
    For any $\alpha \in \F_p((t^{-1}))$, $\mathcal{F}(P\alpha) = \mathcal{F}(\eta\alpha)$.
\end{lemma}

\begin{proof}
    Write $P(x) = a_0 + \sum_{i=1}^n \eta_i(x^{r_i})$ with $\eta_i$ additive and $x^{r_i}$ distinct and separable.
    The additive core $\eta$ satisfies $\eta(g\F_p[t]) = \sum_{i=1}^n \eta_i(g\F_p[t])$ for every $g \in \F_p[t] \setminus \{0\}$ (see Remark \ref{rem: additive core via other rings}).
    By definition,
    \begin{equation*}
        \mathcal{F}(P\alpha) = \sum_{i=1}^n \mathcal{F}(\eta_i\alpha).
    \end{equation*}
    For each $i \in \{1, \ldots, n\}$, let $g_i \in \F_p[t] \setminus \{0\}$ such that
    \begin{equation*}
        \mathcal{F}(\eta_i\alpha) = \overline{\eta_i(g_i\F_p[t])\alpha}.
    \end{equation*}
    Let $g_0 \in \F_p[t] \setminus \{0\}$ such that
    \begin{equation*}
        \mathcal{F}(\eta\alpha) = \overline{\eta(g_0\F_p[t])\alpha}.
    \end{equation*}
    Set $g = \prod_{i=0}^n g_i \in \F_p[t] \setminus \{0\}$.
    Then
    \begin{equation*}
        \mathcal{F}(P\alpha) = \sum_{i=1}^n \overline{\eta_i(g\F_p[t])\alpha} = \overline{\sum_{i=1}^n \eta_i(g\F_p[t])\alpha} = \overline{\eta(g\F_p[t])} = \mathcal{F}(\eta\alpha).
    \end{equation*}
\end{proof}

We can now prove Theorem \ref{thm: irrational equidistribution}.

\begin{proof}[Proof of Theorem \ref{thm: irrational equidistribution}]
    Let $P(x) \in \F_p[x]$, and let $\eta$ be its additive core. \\

    First suppose $\eta(x) = ax$ for some $a \in \F_p^{\times}$.
    Let $\alpha \in \F_p((t^{-1})) \setminus \F_p(t)$ be irrational.
    Then $a\alpha$ is also irrational, so $\mathcal{F}(\eta\alpha) = \F_p((t^{-1}))/\F_p[t]$ by \cite[Theorem 3.1]{bl}.
    Therefore, by Lemma \ref{lem: F-subtorus additive core}, $\mathcal{F}(P\alpha) = \F_p((t^{-1}))/\F_p[t]$, whence $(P(f)\alpha)_{f \in \F_p[t]}$ is uniformly distributed mod $\F_p[t]$ by Theorem \ref{thm: Weyl}.
    Thus, $P$ is good for irrational equidistribution. \\

    Now suppose $\eta(x) = \sum_{j=0}^m a_j x^{p^j}$ with $a_m \ne 0$ and $m \ge 1$.
    We will find an irrational element $\alpha \in \F_p((t^{-1})) \setminus \F_p(t)$ such that $\overline{\eta(\F_p[t])\alpha} \ne \F_p((t^{-1}))/\F_p[t]$.
    We then deduce that
    \begin{equation*}
        \overline{P(\F_p[t])\alpha} \subseteq P(0) \alpha + \overline{\eta(\F_p[t])\alpha} \ne \F_p((t^{-1}))/\F_p[t],
    \end{equation*}
    so $P$ is not good for irrational equidistribution.

    If $a_0 = 0$, then $\eta(\F_p[t]) \subseteq \{f^p : f \in \F_p[t]\}$, so if we pick $\alpha = \beta^p$ for some irrational $\beta$, then
    \begin{equation*}
        \overline{\eta(\F_p[t])\alpha} \subseteq \{x^p : x \in \F_p((t^{-1}))/\F_p[t]\},
    \end{equation*}
    which is a proper subgroup of $\F_p((t^{-1}))/\F_p[t]$ (see \cite[p. 931, Example 1]{bl}).

    Suppose $a_0 \ne 0$.
    Then we may write $\eta(x) = x Q(x)$ with $Q(x) = a_0 + \sum_{j=1}^m a_j x^{p^j-1}$.
	We claim that the set
	\begin{equation*}
		R = \left\{ k \in \N : Q~\text{has a root in}~\F_{p^k} \right\}
	\end{equation*}
	is infinite.
	This is equivalent to proving that $R' = \left\{ g \in \F_p[t] \setminus \{0\} : g~\text{is irreducible and}~Q~\text{has a root} \bmod{g} \right\}$ is infinite, since $\F_{p^k}$ is the quotient of $\F_p[t]$ by an irreducible polynomial of degree $k$.
    We carry out a variant of Euler's proof of the infinitude of primes.
	Given a finite collection of irreducible polynomials $g_1, \dots, g_r \in \F_p[t] \setminus \{0\}$, consider
	\begin{equation} \label{eq: Euclid trick}
		Q(g_1 \dots g_r x) = a_0 + g_1 \dots g_r x \sum_{j=1}^m{a_j \left( g_1 \dots, g_r x \right)^{p^i-2}}.
	\end{equation}
	Since $Q$ is a nonzero polynomial, there exists $f \in \F_p[t]$ such that $Q(g_1 \dots g_r f) \ne 0$.
	From the expression on the right hand side of \eqref{eq: Euclid trick}, we have $Q(g_1 \dots g_r f) \equiv a_0 \pmod{g_i}$ for each $i \in \{1, \dots, r\}$.
	In particular, $g_i \nmid Q(g_1 \dots g_r f)$, so factoring $Q(g_1 \dots g_r f)$ into irreducibles, we find an irreducible polynomial $g \in \F_p[t] \setminus \{g_1, \dots, g_r\}$ such that $Q(g_1 \dots g_r f) \equiv 0 \pmod{g}$.
	Hence, $R'$ is infinite as claimed.
	
	Now let $k \in R$, and let $x \in \F_{p^k}$ with $Q(x) = 0$.
	Then $\eta(x) = xQ(x) = 0$, but $x \ne 0$, since $Q(0) = a_0 \ne 0$.
	Hence, the kernel of $\eta$ as an endomorphism of $\F_{p^k}$ is nontrivial, so $\eta(\F_{p^k})$ is a proper subgroup of $(\F_{p^k}, +)$.
    Therefore, there exists $\xi \in \F_{p^k} \setminus \{0\}$ such that $e_{p^k}(\xi \eta(x)) = 1$ for every $x \in \F_{p^k}$.
	Taking an isomorphism $\F_{p^k} \cong \F_p[t]/g\F_p[t]$ for an irreducible polynomial $g \in \F_p[t]$ of degree $k$, we may lift $\chi(x) = e_{p^k}(\xi x)$ to a $g$-periodic character on $\F_p[t]$ corresponding to a rational point $\frac{\tilde{\xi}}{g}$ for some $\tilde{\xi} \in \F_p[t]$, $\deg{\tilde{\xi}} < k$.
	Thus, the set
	\begin{equation*}
		A = \left\{ \alpha \in \F_p((t^{-1}))/\F_p[t] : e(\eta(f) \alpha) = 1~\text{for every}~f \in \F_p[t] \right\}
	\end{equation*}
	is infinite, since it contains a point of the form $\frac{\tilde{\xi}}{g}$ for each $k \in R$.
	But $A$ is a closed subgroup of the compact group $\F_p((t^{-1}))/\F_p[t]$, so it is uncountable.\footnote{This is a basic fact about compact groups for which we unfortunately do not know of any good reference.
	One can easily deduce this fact from the existence of a Haar probability measure on compact groups, but more elementary arguments are also possible, one of which we sketch now.
	Suppose for contradiction that $A$ is countably infinite.
	Then $\bigcap_{x \in A} (A \setminus \{x\}) = \es$, so by the Baire category theorem, at least one of the sets $A \setminus \{x\}$ must not be dense.
	That is, $A$ has an isolated point.
	But $A$ is a topological group, so it follows that every point in $A$ is isolated.
	An infinite collection of isolated points is non-compact, so we have reached a contradiction.}
	In particular, $A$ contains an irrational element.

    Let $\alpha \in A$ be an irrational element.
    Then $\overline{\eta(\F_p[t])\alpha}$ is annihilated by the character $x \mapsto e(x)$, so $\overline{\eta(\F_p[t])\alpha}$ is a proper subgroup of $\F_p((t^{-1}))/\F_p[t]$.
    This completes the proof.
\end{proof}


\section{An asymmetric Furstenberg--S\'{a}rk\"{o}zy theorem} \label{sec: A,B Sarkozy}

With an understanding of irrational equidistribution at hand from Theorem \ref{thm: irrational equidistribution}, we can now prove Theorem \ref{thm: A,B Sarkozy}, dealing with an asymmetric form of the Furstenberg--S\'{a}rk\"{o}zy theorem.

\ABSarkozy*

\begin{proof}
	(i) $\implies$ (iv).
	Let $Q(x) \in \F_p[x]$ and $s \ge 0$ such that $Q$ is good for irrational equidistribution and $P(x) = Q(x^{p^s})$.
	By Theorem \ref{thm: irrational equidistribution}, it follows that the additive core $\eta$ of $P$ is of the form $\eta(x) = a x^{p^s}$ for some $a \in \F_p^{\times}$.
	In particular, $\eta(\F_{p^k}) = \F_{p^k}$ for every $k \in \N$.
	We then apply Corollary \ref{cor: projection} and the Cauchy--Schwarz inequality:
	\begin{align*}
		\Big| \left| \left\{ (x,y) \in \F_{p^k} \times \F_{p^k} : \right.\right. & \left.\left. x \in A~\text{and}~x + P(y) \in B \right\} \right| - |A| |B| \Big| \\
         & = p^{2k} \left| \E_{x \in \F_{p^k}} \ind_A(x) \left( \E_{y \in \F_{p^k}} \ind_B(x + P(y)) - \E_{z \in \F_{p^k}} \ind_B(z) \right) \right| \\
		 & \le p^{2k} \norm{L^2(\F_{p^k})}{\ind_A} (d-1) p^{-k/2} \norm{L^2(\F_{p^k})}{\ind_B} \\
		 & = (d-1) p^{k/2} \sqrt{|A| |B|}.
	\end{align*} \\
	
	(iv) $\implies$ (ii).
	Let $\delta > 0$.
	Let $C > 0$ be the implicit constant in (iv).
	Set $K_1 = \floor{\log_p(C^2 \delta^{-1})} + 1$ so that $p^{K_1} > C^2 \delta^{-1}$, and suppose $k \ge K_1$.
	Let $A, B \subseteq \F_{p^k}$ with $|A| |B| \ge \delta p^{2k} > C^2 p^k$.
	By (iv),
	\begin{equation*}
		\Big| \left| \left\{ (x,y) \in \F_{p^k} \times \F_{p^k} : x \in A~\text{and}~x + P(y) \in B \right\} \right| - |A| |B| \Big| \le C p^{k/2} \sqrt{|A| |B|}.
	\end{equation*}
	In particular,
	\begin{equation*}
		\left| \left\{ (x,y) \in \F_{p^k} \times \F_{p^k} : x \in A~\text{and}~x + P(y) \in B \right\} \right| \ge \sqrt{|A| |B|} \left( \sqrt{|A| |B|} - C p^{k/2} \right) > 0.
	\end{equation*}
	Let $(x,y) \in \F_{p^k} \times \F_{p^k}$ with $x \in A$ and $x + P(y) \in B$ and put $a = x$, $b = x + P(y)$.
	Then $a \in A$, $b \in B$, and $b - a = P(y)$. \\
	
	(ii) $\implies$ (i).
	We prove the contrapositive.
	Let $\eta$ be the additive core of $P$, and suppose $\eta(x) = \sum_{j=0}^m a_j x^{p^j}$ with at least two nonzero coefficients.
	Take $s = \min\{0 \le j \le m : a_j \ne 0\}$, and put $\eta'(x) = \sum_{j=0}^{m-s} a_{s+j} x^{p^j}$ so that $\eta(x) = \eta'(x^{p^s})$.
	As in the proof of Theorem \ref{thm: irrational equidistribution}, the set
	\begin{equation*}
		R = \left\{ k \in \N : \frac{\eta'(x)}{x}~\text{has a root in}~\F_{p^k} \right\}
	\end{equation*}
	is infinite.
	Let $k \in R$, and let $H_k = \eta(\F_{p^k})$.
	Note that $H_k$ is a proper subgroup of $\F_{p^k}$, since $H_k = \eta'(\F_{p^k})$ and $\eta'$ has a nonzero root.
	Taking $A = H_k$ and $B = H_k + c$ a nontrivial coset, we have $b - a \in H_k + c$, while $P(x) \in H_k$ for every $a \in A, b \in B, x \in \F_{p^k}$.
	Moreover, $|A| \cdot |B| = |H_k|^2 \ge \left( \frac{p^k}{d} \right)^2$, so condition (ii) fails for $\delta = d^{-2}$. \\
	
	(ii) $\implies$ (iii).
	Let $\delta > 0$.
	Let $k \ge K_1(P, \delta)$, and suppose $A, B \subseteq \F_{p^k}$ with $|A| \cdot |B| \ge \delta p^{2k}$.
	Fix $c \in \F_{p^k}$.
	By the definition of $K_1$, there exist $x \in \F_{p^k}$, $a \in A$, and $b \in (c - B)$ such that $b - a = P(x)$.
	Writing $b = c - b'$ with $b' \in B$, we have $c = a + b' + P(x) \in A + B + S$.
	Since $c$ was arbitrary, this prove $A + B + S = \F_{p^k}$. \\
	
	(iii) $\implies$ (ii).
	Let $\delta > 0$.
	Let $k \ge K_2(P, \delta)$, and suppose $A, B \subseteq \F_{p^k}$ with $|A| \cdot |B| \ge \delta p^{2k}$.
	By the definition of $K_2$, we have $A + (-B) + S = \F_{p^k}$.
	In particular, $0 \in A + (-B) + S$, so there exist $a \in A$, $b \in B$, and $x \in \F_{p^k}$ such that $a - b + P(x) = 0$.
	That is, $b - a = P(x)$.
\end{proof}


\section{Partition regularity of polynomial equations over finite fields} \label{sec: partition regularity}

In this section, we obtain applications of Theorem \ref{thm: exponential sum} to partition regularity of polynomial equations over finite fields.
We are interested in problems of the following kind.
Given polynomials $P_1, P_2, P_3 \in \F_p[x]$ and a finite coloring of the field $\F_{p^k}$ (here, the number of colors should be thought of as fixed and the parameter $k$ very large), how many solutions $(x,y,z) \in \F_{p^k}$ of the equation $P_1(x) + P_2(y) + P_3(z) = 0$ are monochromatic?

As a starting point, we must first address the problem of counting the total number of solutions of equations of the form $P_1(x) + P_2(y) + P_3(z) = 0$.
When the equation defines a geometrically irreducible variety\footnote{A system of polynomial equations $P_1(x_1, \dots, x_d) = c_1, \dots, P_k(x_1, \dots, x_d) = c_k$ with $P_1, \dots, P_k \in \F_p[x_1,\dots,x_d]$ defines a \emph{geometrically irreducible variety} if the set of solutions $V \subseteq \overline{\F}_p^d$ over the algebraic closure $\overline{\F}_p$ cannot be written as a union of two sets $V_1$ and $V_2$ that are themselves sets of solutions of systems of polynomial equations. For a single equation $P_1(x) + P_2(y) + P_3(z) = 0$, this corresponds to the polynomial $P(x,y,z) = P_1(x) + P_2(y) + P_3(z)$ being an irreducible polynomial in $\overline{\F}_p[x,y,z]$.}, the work of Lang and Weil \cite{lw} provides a satisfactory answer: the number of solutions is approximately $p^{2k}$, with an error of size $O(p^{3k/2})$.\footnote{Lang and Weil are in fact able to provide strong estimates on the number of solutions of systems of polynomial equations of a much more general form, as long as the system defines a geometrically irreducible variety.}
The method of Lang and Weil uses induction on the dimension of the variety, with the Weil bound as the base case and an estimate on how many slices of the variety by hyperplanes may become reducible in order to carry out the induction step.
In order to count solutions of equations of the form $P_1(x) + P_2(y) + P_3(z) = 0$ without any irreducibility assumption, we take a slightly different approach.
Because of the special form of the equation, we write the number of solutions as a double sum
\begin{equation*}
    \sum_{a,z \in \F_{p^k}} f_1(a) f_2(- a - P_3(z)),
\end{equation*}
where $f_1(a)$ is the number of solutions of the equation $P_1(x) = a$ and $f_2(b)$ is the number of solutions of the equation $P_2(y) = b$.
We can then estimate the sum using Corollary \ref{cor: projection}.
(We should note that, similarly to Lang and Weil, the quantitative strength provided by our method relies on the Weil bound.)

Let us make a few basic observations about the equation $P_1(x) + P_2(y) + P_3(z) = 0$.
By collecting the constant terms together, we may assume $P_1(0) = P_2(0) = P_3(0) = 0$ and instead solve the equation
\begin{equation} \label{eq: polynomial equation}
	P_1(x) + P_2(y) + P_3(z) = c
\end{equation}
for some constant $c$.
One can give an algebraic criterion that $c$ must satisfy in order for this equation to be solvable over $\F_{p^k}$, which we now describe.
Let $H_{k,i}$ be the additive subgroup of $(\F_{p^k}, +)$ generated by $\{P_i(x) : x \in \F_{p^k}\}$, and let $H_k$ be the subgroup $H_k = H_{1,k} + H_{2,k} + H_{3,k}$.
We may compute the group $H_k$ explicitly using Lemma \ref{lem: Euclidean}.
First, $H_{k,i} = \eta_i(\F_{p^k})$, where $\eta_i$ is the additive core of $P_i$.
Next, we let $\eta$ be the additive polynomial produced via Lemma \ref{lem: Euclidean} from the additive polynomials $\eta_1, \eta_2, \eta_3$.
Then $H_k = \eta(\F_{p^k})$ for every $k \in \N$.
Clearly, for any $x, y, z \in \F_{p^k}$, one has $P_1(x) + P_2(y) + P_3(z) \in H_k$.
As the following proposition shows, the set of values $c \in \F_{p^k}$ for which \eqref{eq: polynomial equation} has solutions is exactly the subgroup $H_k$, provided that $k$ is sufficiently large (depending on the polynomials $P_1, P_2, P_3$).
Moreover, the number of solutions of \eqref{eq: polynomial equation} is roughly the same for every value of $c \in H_k$.

\begin{proposition} \label{prop: solutions sum of three polys}
	Let $P_1, P_2, P_3 \in \F_p[x]$ be nonconstant polynomials with $P_i(0) = 0$.
	For each $i \in \{1,2,3\}$, let $H_{k,i} = \left\langle P_i(x) : x \in \F_{p^k} \right\rangle \le (\F_{p^k}, +)$, and let $H_k = H_{k,1} + H_{k,2} + H_{k,3}$.
	Then for any $k \in \N$ and any $c \in H_k$,
	\begin{equation} \label{eq: number of solutions}
		\left| \left\{ (x,y,z) \in \F_{p^k}^3 : P_1(x) + P_2(y) + P_3(z) = c \right\} \right|
		  = \frac{p^{3k}}{|H_k|} + O \left( p^{3k/2} \right),
	\end{equation}
	In particular, if $k$ is sufficiently large, then \eqref{eq: polynomial equation} has a solution over $\F_{p^k}$ if and only if $c \in H_k$.
\end{proposition}

\begin{remark}
    The subgroup $H_k$ appearing in Proposition \ref{prop: solutions sum of three polys} satisfies the bound
    \begin{equation*}
        \frac{p^k}{d} \le |H_k| \le p^k,
    \end{equation*}
    where $d = \min\{\deg{P_1}, \deg{P_2}, \deg{P_3}\}$.
    Indeed, $H_{k_i} = \eta_i(\F_{p^k})$ for an additive polynomial $\eta_i$ (the additive core of $P_i$) with degree at most $\deg{P_i}$, and
    \begin{equation*}
        |H_k| \ge |H_{k,i}| = \frac{p^k}{\ker \eta_i}.
    \end{equation*}
    Therefore, it follows from \eqref{eq: number of solutions} that if $c \in H_k$, then the number of solutions $(x,y,z) \in \F_{p^k}$ of the equation $P_1(x) + P_2(y) + P_3(z) = c$ is of order $p^{2k}$.
\end{remark}

\begin{proof}
	For any $k \in \N$, any $c \in \F_{p^k}$, and any functions $f_1, \dots, f_r : \F_{p^k} \to \F_{p^k}$, let $N(k,c; f_1, \dots, f_r)$ denote the number of solutions $(x_1, \dots, x_r) \in \F_{p^k}^r$ to the equation $\sum_{i=1}^r{f_i(x_i)} = c$.
	Our goal is to show
	\begin{equation*}
		N(k, c; P_1, P_2, P_3) = \frac{p^{3k}}{|H_k|} + O \left( p^{3k/2} \right)
	\end{equation*}
	for $c \in H_k$.
	
	For each $i \in \{1, 2, 3\}$, let $\eta_i$ be the additive core of $P_i$ so that $H_{k,i} = \eta_i(\F_{p^k})$, and let $d_i = \deg{P_i}$. \\
	
	\underline{Claim}: $N(k, c; P_1, P_2, P_3) = N(k, c; P_1, P_2, \eta_3) + O \left( p^{3k/2} \right)$.
	 
	 Fix $k \in \N$.
	 Let $f_i(x) = N(k, x; P_i)$.
	 Then
	 \begin{equation*}
	 	N(k, c; P_1, P_2, P_3) = \sum_{x, y \in \F_{p^k}}{f_1(x) f_2(c - x - P_3(y))} = p^{2k} \E_{x,y \in \F_{p^k}}{f_1(x) f_2(c - x - P_3(y))}.
	 \end{equation*}
	 Similarly,
	 \begin{equation*}
	 	N(k, c; P_1, P_2, \eta_3) = p^{2k} \E_{x,y \in \F_{p^k}}{f_1(x) f_2(c - x - \eta_3(y))} = p^{2k} \E_{x \in \F_{p^k}}{\E_{z \in H_{k,3}}{f_1(x) f_2(c - x - z)}}.
	 \end{equation*}
	 Hence, by the Cauchy--Schwarz inequality,
	 \begin{multline*}
	 	\left| N(k, c; P_1, P_2, P_3) - N(k, c; P_1, P_2, \eta_3) \right| \\
		 \le p^{2k} \norm{L^2(\F_{p^k})}{f_1} \norm{L^2(\F_{p^k})}{\E_{y \in \F_{p^k}}{f_2(c - x - P_3(y))} - \E_{z \in H_{k,3}}{f_2(c - x - z)}}.
	 \end{multline*}
	Now, by Corollary \ref{cor: projection},
	\begin{equation*}
		\norm{L^2(\F_{p^k})}{\E_{y \in \F_{p^k}}{f_2(c - x - P_3(y))} - \E_{z \in H_{k,3}}{f_2(c - x - z)}} \le (d_3-1) p^{-k/2} \norm{L^2(\F_q)}{f_2}.
	\end{equation*}
	Finally, for each $x \in \F_{p^k}$, the polynomial equation $P_i(u) = x$ has at most $d_i$ solutions $u \in \F_{p^k}$, so $\norm{L^2(\F_{p^k})}{f_i} \le \norm{L^{\infty}(\F_{p^k})}{f_i} \le d_i$.
	Putting everything together,
	\begin{equation*}
		N(k, c; P_1, P_2, P_3) = N(k, c; P_1, P_2, \eta_3) + O \left( p^{3k/2} \right)
	\end{equation*}
	as claimed. \\
	
	Applying the claim also to $P_1$ and $P_2$, we obtain the estimate
	\begin{equation*}
		N(k, c; P_1, P_2, P_3) = N(k, c; \eta_1, \eta_2, \eta_3) + O \left( p^{3k/2} \right).
	\end{equation*}
	Let $\eta : \F_{p^k}^3 \to \F_{p^k}$, $\eta(x,y,z) = \eta_1(x) + \eta_2(y) + \eta_3(z)$.
	Then $\eta$ is a group homomorphism with image $H_k$.
	Therefore, $N(k, c; \eta_1, \eta_2, \eta_3) = \left| \eta^{-1}(\{c\}) \right|$ is constant in $c \in H_k$, so
	\begin{equation*}
		N(k, c; \eta_1, \eta_2, \eta_3) = \frac{\left| \F_{p^k}^3 \right|}{|H_k|} = \frac{p^{3k}}{|H_k|}.
	\end{equation*}
\end{proof}

As discussed above, the class of polynomials handled by Proposition \ref{prop: solutions sum of three polys} is very restricted in comparison to the results of \cite{lw}.
However, the elementary method of proof allows us to avoid any irreducibility assumption and is more flexible for combinatorial enhancements, such as the following Ramsey-theoretic result, restated from the introduction:

\PartitionRegularity*

\begin{remark}
	The assumption that $Q$ is FF$_p$-intersective is necessary in Theorem \ref{thm: P-P=Q}.
	If $Q$ is not FF$_p$-intersective, then there is a sequence $k_n \to \infty$ such that $H_n = \left\langle Q(x) - Q(0) : x \in \F_{p^{k_n}} \right\rangle \le (\F_{p^{k_n}}, +)$ does not contain $Q(0)$ (see property (iv) in the proof of Theorem \ref{thm: FF intersective}).
	We may color $\F_{p^{k_n}}$ by cosets of $H_n$.
	The number of colors is equal to the index of $H_n$, which is bounded by $\deg{Q}$, so by refining the sequence $(k_n)_{n \in \N}$, we may assume the number of colors is a constant $r$.
	But for this sequence of $r$-colorings, the equation $x - y = Q(z)$ does not have any monochromatic solutions.
\end{remark}

Our proof of Theorem \ref{thm: P-P=Q} combines Theorem \ref{thm: exponential sum} with tools from the theory of Loeb measures on ultraproduct spaces and a technique from \cite{b_density-schur, ert-update} previously used to establish partition regularity of the equation $x - y = z^2$ over the integers.
We will need the following generalization of Corollary \ref{cor: projection}:

\begin{proposition} \label{prop: diagonal action}
	Let $P(x) \in \F_p[x]$ be a nonconstant polynomial of degree $d$, and let $\eta$ be its additive core.
	Then for any $k \in \N$, any $m \in \N$, and any $f : \F_{p^k}^m \to \C$,
	\begin{multline*}
		\norm{L^2(\F_{p^k}^m)}{\E_{y \in \F_{p^k}}{f(x_1 + P(y), \dots, x_m + P(y)))} - \E_{z \in H_k}{f(x_1 + a_0 + z, \dots, x_m + a_0 + z)}} \\
         \le (d-1) p^{-k/2} \norm{L^2(\F_{p^k}^m)}{f},
	\end{multline*}
	where $H_k = \eta(\F_{p^k})$ and $a_0 = P(0)$.
\end{proposition}

\begin{proof}
	Define $F : \F_{p^k}^m \to \C$ by
	\begin{equation*}
		F(\bm{x}) = \E_{y \in \F_{p^k}}{f(x_1 + P(y), \dots, x_m + P(y)))} - \E_{z \in H_k}{f(x_1 + a_0 + z, \dots, x_m + a_0 + z)}.
	\end{equation*}
	Then for $\bm{\xi} = (\xi_1, \dots, \xi_m) \in \F_{p^k}^m$, we have
	\begin{equation*}
		\hat{F}(\bm{\xi}) = \hat{f}(\bm{\xi}) \left( \E_{y \in \F_{p^k}} e \left( \sum_{i=1}^m \xi_i P(y) \right) - e \left( \sum_{i=1}^m \xi_i a_0 \right) \ind_{H_k^{\perp}} \left( \sum_{i=1}^m \xi_i \right) \right).
	\end{equation*}
	Theorem \ref{thm: exponential sum 2} gives the bound
	\begin{equation*}
		\left| \E_{y \in \F_{p^k}} e \left( \sum_{i=1}^m \xi_i P(y) \right) - e \left( \sum_{i=1}^m \xi_i a_0 \right) \ind_{H_k^{\perp}} \left( \sum_{i=1}^m \xi_i \right) \right| \le (d-1) p^{-k/2}.
	\end{equation*}
	Therefore, by Parseval's identity,
	\begin{equation*}
		\norm{L^2(\F_{p^k}^m)}{F} \le (d-1) p^{k/2} \left( \sum_{\bm{\xi} \in \F_{p^k}^m} \left| \hat{f} \left( \bm{\xi} \right) \right|^2 \right)^{1/2} = (d-1) p^{-k/2} \norm{L^2(\F_{p^k}^m)}{f}.
	\end{equation*}
\end{proof}

The relevant constructions for employing measure theory on ultraproducts are summarized as follows:

\begin{definition} ~
	\begin{itemize}
		\item	An \emph{ultrafilter} on $\N$ is a collection $\mU \subseteq \mathcal{P}(\N)$ of nonempty subsets of $\N$ such that:
			\begin{itemize}
				\item	if $A, B \in \mU$, then $A \cap B \in \mU$;
				\item	for any $A \subseteq \N$, either $A \in \mU$ or $\N \setminus A \in \mU$.
			\end{itemize}
			The ultrafilter $\mU$ is \emph{principal} if $\mU = \{A \subseteq \N : n \in A\}$ for some $n \in \N$ and \emph{non-principal} otherwise.
			The space of ultrafilters is denoted $\beta \N$.
		\item	Given $\mU \in \beta \N$ and a family of sets $(X_n)_{n \in \N}$, the \emph{ultraproduct} is the set
			\begin{equation*}
				\prod_{n \to \mU}{X_n} = \left( \prod_{n \in \N}{X_n} \right)/\equiv_{\mU},
			\end{equation*}
			where $\equiv_{\mU}$ is the equivalence relation defined by $(x_n)_{n \in \N} \equiv_{\mU} (y_n)_{n \in \N}$ if and only if $\{n \in \N : x_n = y_n\} \in \mU$.
		\item	Given $\mU \in \beta \N$ and a sequence $(x_n)_{n \in \N}$ taking values in a compact Hausdorff space $X$, the \emph{limit of $(x_n)_{n \in \N}$ along $\mU$} is defined to be the unique point\footnote{Such a point $x$ exists by compactness and is unique by the Hausdorff property.} $x \in X$ such that for any neighborhood $U$ of $x$, one has $\{n \in \N : x_n \in U\} \in \mU$.
			The limit of $(x_n)_{n \in \N}$ along $\mU$ is denoted by $\lim_{n \to \mU}{x_n}$.
		\item	Let $\mU \in \beta \N$, and let $(X_n, \mX_n, \mu_n)_{n \in \N}$ be a family of probability spaces.
			Let $X = \prod_{n \to \mU}{X_n}$.
			\begin{itemize}
				\item	An \emph{internal set} is a set of the form $\prod_{n \to \mU}{A_n}$ with $A_n \in \mX_n$.
				\item	The \emph{Loeb $\sigma$-algebra} $\mX$ is the $\sigma$-algebra on $X$ generated by the algebra of internal sets.
				\item	The \emph{Loeb measure} $\mu$ is the unique probability measure on $\mX$ with the property
					\begin{align*}
						\mu(A) = \lim_{n \to \mU}{\mu_n(A_n)}
					\end{align*}
					for any internal set $A = \prod_{n \to \mU}{A_n}$.
			\end{itemize}
	\end{itemize}
\end{definition}

The main property of the Loeb measure that we will use is the following version of Fubini's theorem:

\begin{proposition}[cf. \cite{keisler}, Theorem 1.12] \label{prop: Loeb Fubini}
	Let $(X_n)_{n \in \N}$ and $(Y_n)_{n \in \N}$ be sequences of finite sets.
	Let $\mU$ be a non-principal ultrafilter.
	Let $X = \prod_{n \to \mU}{X_n}$ and $Y = \prod_{n \to \mU}{Y_n}$.
	Let $f : X \times Y \to \C$ be a bounded Loeb-measurable function.
	Then
	\begin{enumerate}[(1)]
		\item	for any $x \in X$, the function $y \mapsto f(x,y)$ is Loeb-measurable on $Y$;
		\item	the function $x \mapsto \int_Y{f(x,y)~d\mu_Y(y)}$ is Loeb-measurable on $X$; and
		\item	\begin{equation*}
				\int_{X \times Y}{f~d\mu_{X \times Y}}
				 = \int_X{\left( \int_Y{f(x,y)~d\mu_Y(y)} \right)~d\mu_X(x)}.
			\end{equation*}
	\end{enumerate}
\end{proposition}

\begin{remark}
	Proposition \ref{prop: Loeb Fubini} does not follow from the standard version of Fubini's theorem.
	The subtlety lies in the structure of the Loeb $\sigma$-algebra on the product space $X \times Y$: there are internal subsets of $X \times Y$ that cannot be approximated by Boolean combinations of Cartesian products of internal subsets of $X$ and $Y$ (on the finitary level, this corresponds to approximating subsets of $X_n \times Y_n$ by products of boundedly many subsets of $X_n$ and $Y_n$).
	Therefore, the function $f$ need not be measurable with respect to the product of the Loeb $\sigma$-algebras on $X$ and $Y$.
	Nevertheless, Proposition \ref{prop: Loeb Fubini} shows that $\mu_{X \times Y}$ shares important features with the product measure $\mu_X \times \mu_Y$.
\end{remark}

\begin{proof}[Proof of Theorem \ref{thm: P-P=Q}]
	Let $r \in \N$.
	Suppose for contradiction that there are $r$-colorings $\F_{p^{k_n}} = \bigcup_{i=1}^r{C_{n,i}}$ with $k_n \to \infty$ such that $|M_n| = o_{n \to \infty} \left( p^{2k_n} \right)$, where
	\begin{equation*}
		M_n = \left\{ (x,y,z) \in \F_{p^{k_n}}^3 : P(x) - P(y) = Q(z)~\text{and}
		 ~\{x,y,z\} \subseteq C_{n,i}~\text{for some}~i \in \{1, \dots, r\} \right\}
	\end{equation*}
	is the collection of monochromatic solutions to the equation $P(x) - P(y) = Q(z)$.
	
	Now we define a limit object associated with this sequence of colorings.
	Fix a non-principal ultrafilter $\mU$ on $\N$.
	Let $\F_{\infty}$ be the pseudo-finite field $\F_{\infty} = \prod_{n \to \mU}{\F_{p^{k_n}}}$, let $C_i = \prod_{n \to \mU}{C_{n,i}} \subseteq \F_{\infty}$, and let $M = \prod_{n \to \mU}{M_n}$.
	Denote by $\mu$ the Loeb measure on $\F_{\infty}$ obtained by equipping $\F_{p^{k_n}}$ with the normalized counting measure.
	For any $s \in \N$, we denote the Loeb measure on $\F_{\infty}^s$ by $\mu^s$ (not to be confused with the product measure $\mu \times \dots \times \mu$ on $\F_{\infty}^s$).
	Let $V_n = \left\{ (x,y,z) \in \F_{p^{k_n}}^3 : P(x) - P(y) = Q(z) \right\}$ and $V = \prod_{n \to \mU}{V_n}$.
	Finally, let $\mu_V$ be the Loeb measure on $V$ obtained from the normalized counting measures on $V_n$. \\
	
	\underline{Claim 1}: $\F_{\infty} = \bigcup_{i=1}^r{C_i}$.
	
	Let $x = (x_n)_{n \in \N} \in \F_{\infty}$.
	For $i \in \{1, \dots, r\}$, let $I_i = \left\{ n \in \N : x_n \in C_{n,i} \right\}$.
	Then $\N = \bigcup_{i=1}^r{I_i}$, so $I_{i_0} \in \mU$ for some $i_0 \in \{1, \dots, r\}$, since $\mU$ is an ultrafilter.
	By the definition of the sets $C_1, \dots, C_r$, it follows that $x \in C_{i_0}$.
	This proves the claim. \\
	
	Arguing as in the proof of Claim 1 above, one can check that $M$ is the set of monochromatic solutions $(x,y,z) \in \F_{\infty}^3$ to the equation $P(x) - P(y) = Q(z)$ with respect to the coloring $\F_{\infty} = \bigcup_{i=1}^r{C_i}$. \\
	
	\underline{Claim 2}: $\mu_V(M) = 0$.
	
	We have constructed $M$ as an internal set, so by the definition of the Loeb measure,
	\begin{equation*}
		\mu_V(M) = \lim_{n \to \mU}{\frac{|M_n|}{|V_n|}}.
	\end{equation*}
	Now, by Proposition \ref{prop: solutions sum of three polys}, $|V_n| = \frac{p^{3k_n}}{|H_{k_n}|} + O \left( p^{3k_n/2} \right)$, where $H_{k_n}$ is the subgroup generated by $\{P(x) - P(y) - Q(z) : x, y, z \in \F_{p^{k_n}}\}$.
	Noting that $\frac{p^{k_n}}{\min\{\deg{P}, \deg{Q}\}} \le |H_{k_n}| \le p^{k_n}$, we have
	\begin{equation} \label{eq: asymptotics of V_n}
		1 \le \liminf_{n \to \infty} \frac{|V_n|}{p^{2k_n}} \le \limsup_{n \to \infty} \frac{|V_n|}{p^{2k_n}} < \infty.
	\end{equation}
	By assumption, $|M_n| = o \left( p^{2k_n} \right)$.
	Hence, $\frac{|M_n|}{|V_n|} = o(1)$, so $\mu_V(M) = 0$, since $\mU$ is non-principal. \\
	
	Let $A_i = P(C_i) = \prod_{n \to \mU}{P(C_{n,i})}$.
	Note that
	\begin{equation*}
		\frac{1}{d} \mu(C_i) \le \mu(A_i) \le \mu(C_i),
	\end{equation*}
	where $d = \deg{P}$.
	In particular, $\mu(A_i) = 0$ if and only if $\mu(C_i) = 0$.
	
	Without loss of generality, we may assume that $\mu(C_i) > 0$ for $1 \le i \le s$ and $\mu(C_i) = 0$ for $s+1 \le i \le r$, for some $s \in \{1, \dots, r\}$.
	Let $A = A_1 \times \dots \times A_s \subseteq \F_{\infty}^s$.
	Note that $\mu^s(A) = \prod_{i=1}^s{\mu(A_i)} > 0$ by Proposition \ref{prop: Loeb Fubini}.
	Let $T_z : \F_{\infty}^s \to \F_{\infty}^s$ be the map $T_z \bm{x} = (x_1 + z, \dots, x_s + z)$ for $z \in \F_{\infty}$, $\bm{x} = (x_1, \dots, x_s) \in \F_{\infty}^s$.
	For each $n \in \N$ and $i \in \{1, \dots, r\}$, let $A_{n,i} = P(C_{n,i})$, and let $A^{(n)} = A_{n,1} \times \dots \times A_{n,s} \in \F_{p^{k_n}}^s$.
	Also let $T^{(n)}_z : \F_{p^{k_n}}^s \to \F_{p^{k_n}}^s$ be the map $T^{(n)}_z \bm{x} = (x_1 + z, \dots, x_s + z)$ for $z \in \F_{p^{k_n}}$ and $\bm{x} = (x_1, \dots, x_s) \in \F_{p^{k_n}}^s$.
	Now, since $Q$ is FF$_p$-intersective, we have
	\begin{equation} \label{eq: diagonal action asymptotics}
		\E_{\bm{x} \in \F_{p^{k_n}}^s} \E_{z \in \F_{p^{k_n}}} \ind_{A^{(n)}}(\bm{x}) \ind_{A^{(n)}} \left( T^{(n)}_{Q(z)} \bm{x} \right) = \E_{\bm{x} \in \F_{p^{k_n}}^s} \E_{y \in H_{k_n}} \ind_{A^{(n)}}(\bm{x}) \ind_{A^{(n)}} \left( T^{(n)}_y \bm{x} \right) + o_{n \to \infty}(1)
	\end{equation}
	by Proposition \ref{prop: diagonal action} and the Cauchy--Schwarz inequality.
	Hence,
	\begin{align*}
		\int_{\F_{\infty}} \mu^s \left( A \cap T_{Q(z)} A \right)~d\mu(z)
		 & \stackrel{(1)}{=} \int_{\F_{\infty}^{s+1}} \ind_A(\bm{x}) \ind_A(T_{Q(z)} \bm{x})~d\mu^{s+1}(\bm{x},z) \\
		 & \stackrel{(2)}{=} \lim_{n \to \mU} \E_{(\bm{x},z) \in \F_{p^{k_n}}^{s+1}} \ind_{A^{(n)}}(\bm{x}) \ind_{A^{(n)}} \left( T^{(n)}_{Q(z)} \bm{x} \right) \\
		 & \stackrel{(3)}{=} \lim_{n \to \mU} \E_{\bm{x} \in \F_{p^{k_n}}^s} \E_{y \in H_{k_n}} \ind_{A^{(n)}}(\bm{x}) \ind_{A^{(n)}} \left( T^{(n)}_y \bm{x} \right) \\
		 & \stackrel{(4)}{\ge} \lim_{n \to \mU}{\left( \frac{|A^{(n)}|}{p^{sk_n}} \right)^2} \\
		 & \stackrel{(5)}{=} \mu^s(A)^2 > 0.
	\end{align*}
	The steps are justified as follows.
	Step (1) is a direct application of Proposition \ref{prop: Loeb Fubini}.
	The equality (2) comes from the definition of the Loeb measure $\mu^{s+1}$.
	In step (3), we have taken the limit of both sides of \eqref{eq: diagonal action asymptotics} along $\mU$.
	The inequality (4) holds for each $n \in \N$ by the Cauchy--Schwarz inequality:
	\begin{align*}
		\E_{\bm{x} \in \F_{p^{k_n}}^s} \E_{y \in H_{k_n}} \ind_{A^{(n)}}(\bm{x}) \ind_{A^{(n)}} \left( T^{(n)}_y \bm{x} \right)
		 & = \innprod{\ind_{A^{(n)}}}{\E_{y \in H_{k_n}} T^{(n)}_y \ind_{A^{(n)}}} \\
		 & = \norm{L^2 \left( \F_{p^{k_n}}^s \right)}{\E_{y \in H_{k_n}} T^{(n)}_y \ind_{A^{(n)}}}^2 \\
		 & \ge \left( \frac{\left| A^{(n)} \right|}{p^{sk_n}} \right)^2.
	\end{align*}
	Finally, (5) follows from the definition of the Loeb measure $\mu^s$.
	
	Thus,
	\begin{equation*}
		\mu \left( \left\{ z \in \F_{\infty} : \mu^s \left( A \cap T_{Q(z)} A \right) > 0 \right\} \right) > 0.
	\end{equation*}
	Let $G = \left\{ z \in \bigcup_{i=1}^s{C_i} : \mu^s \left( A \cap T_{Q(z)} A \right) > 0 \right\}$.
	Since the set $\bigcup_{i=s+1}^r{C_i}$ has Loeb measure zero, $\mu(G) > 0$.
	
	For $z \in G$, let $i(z) \in \{1, \dots, s\}$ such that $z \in C_{i(z)}$.
	Noting that
	\begin{equation*}
		\mu^s \left( A \cap T_{Q(z)}A \right) = \prod_{i=1}^s{\mu \left( A_i \cap (A_i + Q(z)) \right)}
	\end{equation*}
	by Proposition \ref{prop: Loeb Fubini}, it follows that
	\begin{equation*}
		\mu \left( A_{i(z)} \cap \left( A_{i(z)} + Q(z) \right) \right) > 0.
	\end{equation*}
	The set $A_{i(z)}$ lies in the image of $P$ by definition, so taking the inverse image under $P$,
	\begin{equation*}
		\mu \left( C_{i(z)} \cap P^{-1} \left( A_{i(z)} + Q(z) \right) \right) > 0.
	\end{equation*}
	Therefore, letting $\alpha = \lim_{n \to \mU} \frac{|V_n|}{p^{2k_n}} \in [1, \infty)$ (see \eqref{eq: asymptotics of V_n}), we have
	\begin{align*}
		\mu_V(M) & = \lim_{n \to \mU} \frac{|M_n|}{|V_n|} \\
		 & = \alpha^{-1} \lim_{n \to \mU} \frac{|M_n|}{p^{2k_n}} \\
		 & = \alpha^{-1} \lim_{n \to \mU} \frac{1}{p^{2k_n}} \sum_{i=1}^r \sum_{z \in \F_{p^{k_n}}} \ind_{C_{n,i}}(z) \left| \left\{ (x,y) \in C_{n,i}^2 : P(x) - P(y) = Q(z) \right\} \right| \\
		 & \ge \alpha^{-1} \sum_{i=1}^r \lim_{n \to \mU} \E_{z \in \F_{p^{k_n}}} \ind_{C_{n,i}}(z) \frac{\left| C_{n,i} \cap P^{-1} \left( A_{n,i} + Q(z) \right) \right|}{p^{k_n}} \\
		 & = \alpha^{-1} \sum_{i=1}^r \int_{\F_{\infty}} \ind_{C_i}(z) \mu \left( C_i \cap P^{-1} \left( A_i + Q(z) \right) \right) \\
		 & \ge \alpha^{-1} \int_G{\mu \left( C_{i(z)} \cap P^{-1} \left( A_{i(z)} + Q(z) \right) \right)~d\mu(z)} \\
		 & > 0.
	\end{align*}
	
	This final inequality contradicts Claim 2, so we are done.
\end{proof}

An important feature of the proof of Theorem \ref{thm: P-P=Q} is the following.
Taking the ultraproduct of a sequence of finite fields $\F_q$ with characteristic growing to infinity, the same method shows that for any nonconstant polynomials $P(x), Q(x) \in \Z[x]$, the equation $P(x) - P(y) = Q(z)$ is partition regular over all fields of sufficiently high characteristic.
This follows by noting that $P$ and $Q$ will be nonconstant and separable (hence good for irrational equidistribution; see Example \ref{eg: irrational equidistribution}(1) above) once the characteristic exceeds the degrees of $P$ and $Q$ and the size of some nonconstant coefficient.

In the special case $P = Q$, Theorem \ref{thm: P-P=Q} can be seen as a polynomial version of Schur's theorem over finite fields.
Indeed, the classical theorem of Schur \cite{schur} asserts that the equation $x + y = z$ is partition regular over $\N$.
We have just established partition regularity of the equation $P(x) + P(y) = P(z)$ over finite fields whenever $P$ is FF-intersective.
While the property of being FF-intersective depends on the characteristic $p$, it is automatically satisfied for polynomials with zero constant term.
Hence, Corollary \ref{cor: polynomial Schur} holds.

The equation $P(x) - P(y) = Q(z)$ is often not partition regular (and may not even be solvable) over $\N$.
A key fact leveraged in the proof of Theorem \ref{thm: P-P=Q} is that polynomials take on a positive proportion of values in finite fields, something that is far from the case in $\N$.
It remains an interesting and difficult open problem, asked by Erd\H{o}s and Graham in \cite{eg}, whether the Pythagorean equation $x^2 + y^2 = z^2$ is partition regular over $\N$.
(This was settled with a computer-assisted proof in the case of 2-colorings in \cite{hkm} but is wide open for 3 or more colors.) \\

Some comments are in order on the use of ultraproducts in the proofs of the aforementioned partition regularity results.
The basic strategy we have taken is to discard those colors that have zero Loeb measure in the ultraproduct and then to use recurrence along the polynomial $Q$ to find the desired points $x,y,z$ with $P(x) - P(y) = Q(z)$.
One may be tempted to carry out this strategy in purely finitary terms, avoiding the use of ultraproducts and Loeb measure.
Unfortunately, this does not work (at least in its most straightforward implementation).
The following discussion illuminates the issues that arise.
Fix a FF$_p$-intersective polynomial $Q(x) \in \F_p[x]$.
For simplicity, we will consider $P(x) = x$.
Let $r \in \N$.
Suppose $k \in \N$ is large and an $r$-coloring $\F_{p^k} = \bigcup_{i=1}^r{C_i}$ is given.
We wish to use a function $\Phi : \N \to \left[ 0, \frac{1}{r} \right]$ as a cutoff for distinguishing ``large'' from ``small'' color classes.
That is, we will consider a color class $C_i$ large if $|C_i| \ge \Phi(k) p^k$ and small if $|C_i| < \Phi(k) p^k$.
Without loss of generality, we may assume $C_1, \dots, C_s$ are large and $C_{s+1}, \dots, C_r$ are small for some $s \in \{1, \dots, r\}$.
(The requirement that $\Phi(k) \le \frac{1}{r}$ guarantees that at least one color class is large.)
We now proceed as in the proof of Theorem \ref{thm: P-P=Q}, using ``large'' as a replacement for having positive Loeb measure.
Let $A = C_1 \times \dots \times C_s$.
Proposition \ref{prop: diagonal action} gives the bound
\begin{equation*}
	\E_{z \in \F_{p^k}} \frac{\left| A \cap \left( A + \left( Q(z), \dots, Q(z) \right) \right) \right|}{p^{sk}} \ge \left( \frac{|A|}{p^{sk}} \right)^2 + O \left( p^{-k/2} \right).
\end{equation*}
Since $\left| A \cap \left( A + \left( Q(z), \dots, Q(z) \right) \right) \right| \le |A|$ for each $z \in \F_{p^k}$, we deduce that
\begin{align*}
	\left| \left\{ z \in \F_{p^k} : A \cap \left( A + \left( Q(z), \dots, Q(z) \right) \right) \ne \es \right\} \right|
	 & \ge \frac{p^{(s+1)k}}{|A|} \E_{z \in \F_{p^k}} \frac{\left| A \cap \left( A + \left( Q(z), \dots, Q(z) \right) \right) \right|}{p^{sk}} \\
	 & \ge \frac{|A|}{p^{(s-1)k}} + O \left( \frac{p^{\left( s + \frac{1}{2} \right) k}}{|A|} \right) \\
	 & \ge \Phi(k)^s p^k + O \left( \Phi(k)^{-s} p^{k/2} \right),
\end{align*}
where in the last step we have used the bound $|A| \ge \Phi(k)^s p^{sk}$.
In order to complete the argument, we want to find $z \in \bigcup_{i=1}^s{C_i}$ satisfying $A \cap \left( A + \left( Q(z), \dots, Q(z) \right) \right) \ne \es$.
To that end, one would like to show
\begin{equation*}
	\left| \left\{ z \in \F_{p^k} : A \cap \left( A + \left( Q(z), \dots, Q(z) \right) \right) \ne \es \right\} \right|
	 > \left| \bigcup_{i=s+1}^r{C_i} \right|.
\end{equation*}
The total size of the small color classes is bounded by
\begin{equation*}
	\left| \bigcup_{i=s+1}^r{C_i} \right| \le (r-1) \Phi(k) p^k.
\end{equation*}
The goal, then, is to choose the function $\Phi : \N \to \left[ 0,\frac{1}{r} \right]$ so that
\begin{equation*}
	\Phi(k)^s p^k > (r-1) \Phi(k) p^k + O \left( \Phi(k)^{-s} p^{k/2} \right).
\end{equation*}
Dividing by $p^k$, this reduces to the inequality
\begin{equation*}
	\Phi(k)^s > (r-1) \Phi(k) + O \left( \Phi(k)^{-s} p^{-k/2} \right).
\end{equation*}
But for $r \ge 2$, this requires $\Phi(k) > 1$, which violates the condition that $0 \le \Phi(k) \le \frac{1}{r}$.

Working with the ultraproduct allows us to replace ``small'' with measure zero.
This is crucial, as we have just seen that ``small'' contributions in the finitary setting may accumulate and overtake individual ``large'' terms.
In contrast, finite unions of measure zero sets remain of measure zero.
However, our infinitary methods come at a cost: we are unable to provide any quantitative control on the values $K$ and $c$ appearing in the statement of Theorem \ref{thm: P-P=Q} and related corollaries.
It is therefore an interesting problem to obtain a purely finitary proof of Theorem \ref{thm: P-P=Q} with effective bounds.


\section*{Acknowledgments}

This work was initiated and substantial portions were carried out while the authors were at the Institute for Advanced Study in Princeton, NJ, participating in the special year program, ``Applications of Dynamics in Number Theory and Algebraic Geometry.''
The first author acknowledges support from the National Science Foundation (Grant No. DMS-1926686) and the Swiss National Science Foundation (Grant No. TMSGI2-211214).
We would also like to thank Peter Sarnak for pointing us to the work of Lang and Weil \cite{lw} and for insightful discussions that helped shape Section \ref{sec: partition regularity}.


\end{document}